\documentclass[12pt]{article}

\oddsidemargin=0cm \evensidemargin=0cm \topmargin=-.1cm
\textwidth=16.8cm  \textheight=22.5cm  \parskip 1em
\usepackage{amsmath}
\usepackage{amsthm,amscd,amsfonts}
\usepackage{amssymb, upref, color,epstopdf}
\usepackage{amsmath,amssymb,amsthm}
\usepackage{color}
\usepackage[colorlinks]{hyperref}
\usepackage[dvips]{graphicx}
\usepackage{epsf,epsfig,subfigure, verbatim}
\usepackage{latexsym,bm}

\numberwithin{equation}{section}

\theoremstyle{plain}
\newtheorem{exam}{Example}[section]
\newtheorem{theorem}[exam]{Theorem}
\newtheorem{lemma}[exam]{Lemma}
\newtheorem{remark}[exam]{Remark}

\newtheorem{definition}[exam]{Definition}


\begin{document}
\date{}


\title{ Limit cycles of a Li\'enard system with symmetry allowing for discontinuity
}
\author{Hebai Chen$^1$\footnote{Hebai Chen was supported by  NNSF of China grant (No. 11572263).},~~Maoan Han$^{2,3}$\footnote{Maoan Han was supported by National Natural Science Foundation of China (11431008 and
11771296).},
\,\,\,\,\,\, Yong-Hui Xia$^{4,5}$
 \footnote{ Corresponding author. Email: xiadoc@163.com.  Yonghui Xia was supported by the National Natural
Science Foundation of China under Grant (No. 11671176 and No. 11271333), Natural
Science Foundation of Zhejiang Province under Grant (No. LY15A010007), Marie Curie Individual Fellowship within the European Community Framework Programme(MSCA-IF-2014-EF, ILDS - DLV-655209), the Scientific Research Funds of Huaqiao University and China Postdoctoral Science Foundation (No. 2014M562320). }
\\
{\small 1.College of Mathematics and Computer Science, Fuzhou University, Fuzhou, Fujian 350002, P. R. China}\\
{\small \em  chen\_hebai@sina.com (H. Chen) }
 \\
 {\small 2. School of Mathematical Sciences, Shanghai Normal University, Shanghai, Shanghai, China.}\\
 {\small \em  mahan@shnu.edu.cn (M. Han)}
  \\
 {\small 3. School of Mathematical Sciences, Qufu Normal University, Qufu, 273165, P.R.China.}
 \\
 {\small 4. School of Mathematical Sciences, Huaqiao University, 362021, Quanzhou, Fujian, China.}\\
{\small\em xiadoc@163.com  (Y-H.Xia)}\\
{\small \em 5.Department of Mathematics, Zhejiang Normal University, Jinhua, 321004, China}
}

 \maketitle

\begin{center}
\begin{minipage}{140mm}
\begin{abstract}
 This paper presents new results on the limit cycles of a Li\'enard system with symmetry allowing for discontinuity.
Our results generalize and improve the results in \cite[Theorem 1 and 2]{Zhang} or
 the monograph \cite[Chapter 4, Theorem 5.2]{Zh}. The results in \cite{Zh} are only valid for the smooth system.
We emphasize that our main results are valid for discontinuous systems.
Moreover, we show the presence and an explicit upper bound for the amplitude of the {\bf two limit cycles}, and we estimate the position of the double-limit-cycle bifurcation surface in
the parameter space. Until now, there is no result to determine the amplitude of the {\bf two limit cycles}. The existing results on the amplitude of limit cycles guarantee that the Li\'enard system has {\bf a unique limit cycle}.
Finally, some applications and examples are provided to show the effectiveness of our results. We revisit a co-dimension-3 Li\'enard oscillator (see \cite{LR,ZY2}) in Application 1. Li and Rousseau \cite{LR} studied the limit cycles of such a system when the parameters are small.
However, for the general case of the parameters (in particular, the parameters are large), the upper bound of the limit cycles remains open. We completely provide the bifurcation diagram for the one-equilibrium case.  Moreover, we determine the amplitude of the two limit cycles and estimate the position of the double-limit-cycle bifurcation surface for the one-equilibrium case. Application 2 is presented to study the limit cycles of a class of the Filippov system.

\end{abstract}

{\bf Keywords:}\ Li\'enard system, discontinuity, limit cycle, Filippov system

{\bf 2000 Mathematics Subject Classification:} 34C25, 34C07, 37G15, 58F21, 58F14

\end{minipage}
\end{center}

\section{Introduction and  main results}
The Hilbert 16th problem was proposed as one of 23 famous problems in mathematics in 1900. It remains open until now. The Hilbert 16th problem has two parts: classification of the on ovals, which are defined by a polynomial equation $\{H(x, y) = 0\}$, and the number of limit cycles of polynomial vector fields. In this paper, we focus on the problems related to the
second part(\cite{DRR, Kaloshin}). The most important topic of the second part is to find the upper bounded number of limit cycles, which is one of the main themes of the quantitative theory of ordinary differential equations (see, e.g., \cite{Chow1,Chow2,Chow3,Zhao-JMAA,DRR,DR,Han,HLZ,Kaloshin,LS,Li,Rychkov,Smale,TH,Han-JDE,Xiao-JMAA,Zhang}). Since the Hilbert 16th problem is notably difficult, it remains open (see \cite{Li}),
 Smale \cite{Smale} suggested to first solve the number of limit cycles of the polynomial Li\'enard systems.
 In fact, the Li\'enard system is a notably common system in engineering and can exhibit notably rich dynamics. 
  The investigation of Li\'enard systems has a long history and many results for the limit cycles (see \cite{Zh}). Rychkov \cite{Rychkov} studied a Li\'enard system as follows.
   \begin{eqnarray}
\left\{\begin{array}{l}
                   \displaystyle\frac{dx}{dt}=y-(\mu_1x+\mu_2x^3+x^5), \\
                   \displaystyle\frac{dy}{dt}=-x,
                 \end{array}\right.
\label{R}
\end{eqnarray}
where $(\mu_1,\mu_2)\in\mathbb{R}^2$. He obtained some sufficient conditions to guarantee that system \eqref{R} exists at most two limit cycles.
   Zhang (see \cite[Theorem 1]{Zhang}) generalized Rychkov's result to a general smooth Li\'enard system as follows:
\begin{eqnarray}
\left\{\begin{array}{l}
                    \displaystyle\frac{dx}{dt}=y-F(x), \\
                   \displaystyle\frac{dy}{dt}=-g(x),
                 \end{array}\right.
\label{initial}
\end{eqnarray}
where $f(x)$ is a continuous function, and $F(x)=\int_0^xf(s)ds$. 
They proved that system \eqref{initial} had at most two limit cycles under suitable conditions. Zhang {\it et al.} also collected this important theorem in the monograph on the quantitative theory (see \cite{Zh}).
For comparison, we restate their results (\cite[Theorems 1 and 2]{Zhang} or \cite[Chapter 4, Theorems 5.1 and 5.2]{Zh}) as follows.\\
\smallskip

\noindent {\bf Theorem A}{  (\cite[Theorems 1 and 2]{Zhang} or \cite[Chapter 4, Theorems 5.1 and 5.2]{Zh})} {\em
Considering the system
\eqref{initial},
if the following conditions hold:
\begin{description}
\item[(a)] $F'(x)\in C^0(-d,d)$ for $d>0$ and $F(-x)=-F(x)$;
\item[(b)] $F(x)=0$ (resp. $<0$, $>0$) for $x=\beta_1,\beta_2$ (resp. $x\in(\beta_1,\beta_2)$, $x\in(0,\beta_1)\cup(\beta_1,d)$),
$F'(x)=0$ (resp. $\leq0$) for $x=\alpha_1$ (resp. $\beta_1<x<\alpha_1$), where $0<\beta_1<\alpha_1<\beta_2<d$;
\item[(c)] $g(x)$ is Lipschitz continuous in $(-d, d)$, $xg(x)>0$ for $\forall x\neq0$, $g(-x)=-g(x)$ in $(-d,d)$
and $G(-\infty)=G(+\infty)=+\infty$, where $G(x)=\int_0^xg(s)ds$;
\item[(d)] either $f(x)$ or $f(x)/g(x)$ is nondecreasing for $x\in[\alpha_1,d]$.
 \end{description}
 Then, system \eqref{initial} has at most two limit cycles.
 (In other words, system \eqref{initial} has either two simple limit cycles or one semi-stable limit cycle if the limit cycle(s) exists(exist).)
 }\\


In \cite{Rychkov}, Rychkov assumed that $f(x)\in C^1(-d,d)$, $(\frac{f(x)}{x})'>0$
for $x\geq\alpha_1$, and $g(x)=x$. Moreover, Rychkov \cite{Rychkov} set the requirements that $f(x)$ has exactly two positive zeros,
and $f(x)$ may have infinitely many positive zeros in $(0,\beta_1)$. In fact,
Theorem A enables $f(0)=0$. Thus, Theorem A (Zhang \cite{Zhang}) improves Rychkov's theorem.
A question is as follows:
Are all conditions in Theorem A sharp? Is it possible to reduce the conditions of Theorem A?

Meanwhile,
many engineering devices can be modeled as nonsmooth dynamical systems,
which deserve considerable attentions, and one can refer to \cite{BBCK} and references therein. In general, the theory of the smooth dynamical system cannot be directly applied to the discontinuous case. In fact, when the vector field of \eqref{initial} is discontinuous, there may be
grazing solutions, sliding solutions or impact solutions. Thus, we must study the discontinuous dynamic systems in theory and applications.
 The analysis shows that system \eqref{initial} has no sliding solutions. Therefore, how to extend Theorem A (Zhang \cite{Zhang} smooth system) to the discontinuous case is an important problem. In this paper, we present more general results that can be applied to
the discontinuous Li\'enard system.


For Li\'enard systems, there are many results on the nonexistence, existence and uniqueness of limit cycles;
see \cite{Zh}.
However, there are no related results on the discontinuous (nonsmooth) Li\'enard systems with at most two limit cycles (see \cite{Zh}).
Thus, we present the following generalized Zhang-type theorem, which enables discontinuity.

\begin{theorem}
Corresponding to system
\eqref{initial},
assume that the following conditions hold:
\begin{description}
\item[(H1)] $F(x)$ is Lipschitz continuous in $(-d,d)$ for $d>0$ and $F(-x)=-F(x)$;
\item[(H2)] $F(x)=0$ (resp. $<0$, $>0$) for $x=\beta_1,\beta_2$ (resp. $x\in(\beta_1,\beta_2)$, $x\in(0,\beta_1)\cup(\beta_1,d)$),
$F(x)$ is $C^1$ continuous in $(\beta_1, \alpha_1)\cup(\alpha_1,d)$,  and  $F'(x)\leq0$ for $\beta_1<x<\alpha_1$, where $0<\beta_1<\alpha_1<\beta_2<d$;
\item[(H3)]  $g(x):=g_0(x)+c~{\rm sign}(x)$, where $g_0$ is Lipschitz continuous in $(-d, d)$, $xg_0(x)>0$ for $\forall x\neq0$,  $g_0(-x)=-g_0(x)$ in $(-d,d)$ and $c\geq0$;
\item[(H4)] either $f(x)$ or $(F(x)-F(\alpha_1))f(x)/g(x)$ is nondecreasing for $x\in[\alpha_1,d]$.
 \end{description}
 Then, system \eqref{initial} has at most two limit cycles.
 (In other words, system \eqref{initial} has either two simple limit cycles or one semistable limit cycle if the limit cycle(s) exists(exist).)
\label{mainresult}
\end{theorem}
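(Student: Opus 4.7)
The plan is to adapt the classical Zhang-type argument for smooth Li\'enard systems to the discontinuous setting of (H3). First I would exploit the symmetries: (H1) and (H3) together make system \eqref{initial} invariant under $(x,y)\mapsto(-x,-y)$, so any limit cycle is centrally symmetric about the origin and meets the discontinuity line $\{x=0\}$ in a symmetric pair of points. Second I would rule out sliding motion on $\{x=0\}$: there $\dot x = y - F(0) = y$, whose sign is independent of which side of $\{x=0\}$ one approaches from, so the normal component of the two limiting vector fields agrees everywhere except at $y=0$; no sliding arc can form. Consequently every limit cycle crosses $\{x=0\}$ transversally at exactly two points, and the trajectories behave as a concatenation of classical smooth Li\'enard arcs on $\{x>0\}$ and $\{x<0\}$.

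The core argument then proceeds by contradiction: assume \eqref{initial} has three nested limit cycles $\Gamma_1\subset\Gamma_2\subset\Gamma_3$ (the semistable case follows by a rotated-vector-field perturbation). Up to the symmetry $(x,y)\mapsto(-x,-y)$, the characteristic exponent of each $\Gamma_i$ is
\begin{equation*}
\chi(\Gamma_i)=-\oint_{\Gamma_i} f(x)\,dt,
\end{equation*}
and the alternating stability of nested limit cycles forces $\chi(\Gamma_1)\leq 0$, $\chi(\Gamma_2)\geq 0$, $\chi(\Gamma_3)\leq 0$. I would split each orbit along the vertical line $x=\alpha_1$ (where $F'$ changes sign by (H2)) and transform each integral in the Li\'enard plane via $dt=-dy/g(x)$ on the arcs where $\dot y\neq 0$. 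On the arcs lying in $x\in(0,\alpha_1)$, condition (H2) ($F'\leq 0$ on $(\beta_1,\alpha_1)$) together with the nesting gives one half of the needed inequality directly; on the arcs in $x\in[\alpha_1,d)$, condition (H4) supplies the other half. The weight $F(x)-F(\alpha_1)$ in (H4), which vanishes precisely at $x=\alpha_1$, is what allows an integration-by-parts that glues these two regions together without producing a boundary term at $x=\alpha_1$.

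The hard part will be the careful bookkeeping at the two nonsmooth loci: the corner of $F$ and $g$ at $x=0$ (where $F$ is only Lipschitz and $g$ has a jump of size $2c$) and the junction at $x=\alpha_1$. Near $x=0$, I plan to apply Green's theorem separately on $\Gamma_i\cap\{x\geq 0\}$ and $\Gamma_i\cap\{x\leq 0\}$ and invoke the central symmetry of $\Gamma_i$ to cancel boundary contributions along $\{x=0\}$, so that the jump term from $c\,\mathrm{sign}(x)$ drops out of the comparison $\chi(\Gamma_j)-\chi(\Gamma_k)$. Near $x=\alpha_1$, the vanishing factor $F(x)-F(\alpha_1)$ handles the matching between the two monotonicity regimes. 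Once the comparison inequalities are established in each region, the three signs above cannot coexist, contradicting the assumption of three nested limit cycles. I expect the delicate point to be verifying that the area/divergence identities remain valid in the piecewise-Lipschitz setting and that the boundary-term cancellations at $\{x=0\}$ really do exploit the odd symmetry in exactly the way the smooth proof uses differentiability of $F$.
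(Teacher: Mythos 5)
Your overall route is the one the paper takes: rule out sliding on $\{x=0\}$ via the Filippov convexification, use the odd symmetry to work with half-orbits, compare $\oint F'(x)\,dt$ along nested cycles by splitting at $x=\alpha_1$ (with (H2) controlling $(\beta_1,\alpha_1)$ and (H4) controlling $[\alpha_1,d]$), and split a semistable cycle with a rotated vector field. There is, however, a genuine gap: your comparison is only set up for cycles that actually reach $x=\alpha_1$, and you never treat limit cycles contained in the strip $|x|\le\alpha_1$. Such cycles are not excluded a priori (since $F<0$ on $(\beta_1,\beta_2)\ni\alpha_1$), and for them the arcs in $x\ge\alpha_1$ are empty, so the term-by-term comparison of $\oint F'\,dt$ with a larger cycle collapses; moreover on $(0,\beta_1)$ neither (H2) nor (H4) gives any sign or monotonicity information on $F'$. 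The paper needs a separate lemma here (Lemma \ref{ulc}): using the energy $\mathcal{E}(x,y)=\int_0^xg(s)\,ds+y^2/2$, for which $d\mathcal{E}/dt=-g(x)F(x)$, it shows there is no cycle in $|x|\le\beta_1$ and at most one cycle, necessarily unstable, in $|x|\le\alpha_1$; only then does the count (at most one small cycle plus at most two crossing $x=\alpha_1$, with stabilities that exclude three in total) close. Relatedly, your sign pattern $\chi(\Gamma_1)\le0$, $\chi(\Gamma_2)\ge0$, $\chi(\Gamma_3)\le0$ presupposes that the origin is a source; under (H2)--(H3) it is a sink (again because $d\mathcal{E}/dt<0$ for $0<|x|<\beta_1$), so the innermost cycle is internally unstable and the pattern is reversed. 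The contradiction with strict monotonicity of the divergence integral survives either way, but as written the stability bookkeeping is wrong.

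On the discontinuity at $x=0$ you and the paper genuinely diverge. You propose Green's theorem on the two half-planes with boundary terms cancelling by symmetry; the paper instead regularizes once and for all with the change of variables $u=\mathrm{sgn}(x)\sqrt{2\int_0^xg(s)\,ds}$, $d\tau=(g(x)/u)\,dt$, which turns \eqref{initial} into a Li\'enard system with $\dot y=-u$ whose vector field is Lipschitz (and $C^1$ off the switching line), so the classical divergence criterion for multiplicity and stability applies directly and $\oint\mathrm{div}\,d\tau=\oint\mathrm{div}\,dt$ is preserved. This is exactly the step you flag as ``delicate'' without carrying out, and I would adopt the paper's substitution rather than attempt to justify the area/divergence identities for a vector field with a jump; note also that this device does not use the oddness of $g$, whereas your cancellation argument does.
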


\begin{remark}
System \eqref{initial} can be discontinuous at $x=0$ in our theorem (Theorem \ref{mainresult}). We will apply our results to a generalized Filippov system, which is a discontinuous system. Application 2 in Section 5 is provided to show that our result is valid for the discontinuous system.
\end{remark}

\begin{remark}
As observed, condition {\bf (H4)} of Theorem \ref{mainresult} is weaker than condition {\bf (d)} of Theorem A. Example 1 in Section 5 shows that in some bad situations, Theorem A is invalid, but our theorem works.
\end{remark}

\begin{remark}
We also remove the condition $G(-\infty)=G(+\infty)=+\infty$ ($G(x)=\int_0^xg(s)ds$) in {\bf (c)} of Theorem A.
\end{remark}

\begin{remark}
Application 1 in Section 5 is provided to show the feasibility of our result. We revisit a generalized codimension-3 Li\'enard oscillator, which was considered in \cite{LR,ZY2}. In fact, Li and Rousseau \cite{LR} studied the limit cycles of such a system when the parameters are small.
However, for the general case of the parameters (in particular, the parameters are large), the upper bound of the limit cycles remains open. Therefore, we give a complete bifurcation diagram of the co-dimension-3 Li\'enard oscillator for the one-equilibrium case.
\end{remark}

Another important topic in the quantitative theory of differential equation is to find the relative position and amplitude of the limit cycles.
  However, few papers considered the amplitude of the limit cycles for smooth Li\'enard systems (\cite{Alsholm,CL,JiangFF,Odani,YZ}) and discontinuous (non-smooth) systems.
 Moreover, the existing results on the amplitude of limit cycles were to guarantee that the Li\'enard system had a unique limit cycle.
 Until now, as far as we know, no paper considered the amplitude of the two limit cycles for the smooth Li\'enard systems and non-smooth dynamical systems.
  The next theorem is presented to guarantee the presence of two limit cycles. We also study the amplitude of the two limit cycles. 

\begin{theorem}
In addition to all conditions in Theorem \ref{mainresult}, suppose that $f(x)/g(x)$ is nondecreasing for $x\in(\alpha_1,d)$ and $\beta_2\geq 2\beta_1$.
If we further assume that
there exists $\xi\in[\beta_1,\beta_2]$
and  $F(x)+F(x+\xi)<0$ for $(0,\xi)$,
then,
  system \eqref{initial} has exactly two limit cycles when
  $\int_{\beta_1}^{d_1}g(x)F(x)dx\geq0$
  and at least one limit cycle when $\int_{\beta_1}^{d_1}g(x)F(x)dx<0$.
\label{mainresult2}
\end{theorem}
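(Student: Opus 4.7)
The plan is to combine Theorem~\ref{mainresult}'s upper bound of two with a direct existence argument via a half-return Poincar\'e map and its energy displacement. Conditions \textbf{(H1)}--\textbf{(H3)} make the vector field equivariant under $(x,y)\mapsto(-x,-y)$; the discontinuity at $x=0$ respects this symmetry and is crossed transversally, so no sliding mode appears and every limit cycle is symmetric about the origin. Starting a trajectory at $(x_0,0)$ with $x_0>0$ and following it through $y<0$ to its first intersection $(-P(x_0),0)$ with the negative $x$-axis gives a continuous half-return map $P$ whose fixed points are exactly the limit cycles. With the energy $V(x,y)=G(x)+y^2/2$ one has $\dot V=-g(x)F(x)$, so I define
\begin{equation*}
\Delta(x_0):=G(P(x_0))-G(x_0)=-\int_{0}^{T(x_0)}g(x(t))F(x(t))\,dt,
\end{equation*}
whose zeros are precisely the limit cycles; Theorem~\ref{mainresult} caps their number at two.

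For $0<x_0<\beta_1$, a standard confinement argument shows $\gamma_{x_0}$ stays inside the strip $|x|<\beta_1$, on which $g(x)F(x)>0$ for $x\neq 0$; hence $\Delta(x_0)<0$, and small orbits spiral into the origin. The decisive step is to produce $\Delta(\xi)>0$. I convert $\int gF\,dt$ into an $x$-integral via $dt=dx/(y-F(x))$, and pair each $x\in(0,\xi)$ (where $F>0$ and $V$ decreases) with its translate $x+\xi$, which by $\beta_2\geq 2\beta_1$ and the choice of $\xi\in[\beta_1,\beta_2]$ lies in a subinterval of $(\beta_1,\beta_2)$ where $F<0$ and $V$ increases. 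The sum of the paired integrands equals a strictly positive weight times $F(x)+F(x+\xi)$, which by hypothesis is negative on $(0,\xi)$; hence $\Delta(\xi)>0$. The monotonicity of $f/g$ on $(\alpha_1,d)$ is used to compare the $(y-F)^{-1}$ factors on the upper and lower branches of $\gamma_\xi$, ensuring that the pairing dominates. Continuity of $\Delta$ then yields a zero in $(0,\xi)$ -- the \emph{inner} limit cycle. This already proves the ``at least one'' conclusion in the case $\int_{\beta_1}^{d_1}gF\,dx<0$.

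Assume finally that $\int_{\beta_1}^{d_1}g(x)F(x)\,dx\geq 0$. I show that any $x_0$ whose orbit reaches $x=d_1$ satisfies $\Delta(x_0)\leq 0$: pulling the time integral back to the $x$-axis once more, the negative contribution from $x\in(\beta_1,\beta_2)$ is dominated by the positive contribution from $x\in(\beta_2,d_1)$ precisely by virtue of the hypothesis, with the monotonicity of $f/g$ again supplying the weight comparison between the orbit's upper and lower arcs. Combined with $\Delta(\xi)>0$, a second sign change of $\Delta$ appears beyond $\xi$, yielding the \emph{outer} limit cycle; Theorem~\ref{mainresult} then forces the count to be exactly two. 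The main obstacle is the pairing argument for $\Delta(\xi)>0$: the upper and lower branches of $\gamma_\xi$ live at different $|y|$-levels, so the translation $x\mapsto x+\xi$ must be coupled with a reparametrisation in $y$ that preserves the sign of the integrand, and only the monotonicity $f/g\nearrow$ keeps the resulting Jacobian under control.
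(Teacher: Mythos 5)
Your overall architecture --- cap the count at two via Theorem \ref{mainresult} and realize the cycles as sign changes of an energy displacement --- is the same as the paper's, and your treatment of the outer cycle (showing that an orbit reaching $x=d_1$ contracts when $\int_{\beta_1}^{d_1}gF\,dx\ge 0$, using the monotonicity of $-g/f$ to compare the weights $1/(y-F)$ on both branches against their value at $x=\beta_2$) is a fair sketch of what the paper actually does. The genuine gap is the step you yourself flag as ``the main obstacle'': the claim $\Delta(\xi)>0$. The paper never proves such a statement; it obtains the first limit cycle by checking that $F(x)$ and $-F(-x)$ are $1$-fold mutually inclusive on $[0,\beta_2]$ (Definition \ref{fmi}) and invoking Theorem B, whose proof constructs an invariant annulus from translated energy arcs confined to $|x|\le\beta_2$ --- it is not a statement about the half-return of the particular orbit through $(\xi,0)$.

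Your pairing argument does not close for two independent reasons. First, the orbit through $(\xi,0)$ is not confined to $|x|\le 2\xi$: at $(\xi,0)$ one has $\dot x=-F(\xi)\ge 0$, so the lower branch moves to the right and may penetrate well past $\beta_2$, where $F>0$ produces an unpaired \emph{negative} contribution to $\Delta$ that the pairing over $(0,\xi)\cup(\xi,2\xi)$ never accounts for. Second, the hypothesis controls $F(x)+F(x+\xi)$, whereas the paired integrands are $\frac{-g(x)F(x)}{y-F(x)}$ and $\frac{-g(x+\xi)F(x+\xi)}{y-F(x+\xi)}$; to extract a ``strictly positive common weight'' you must compare both the $g$-factors and the $1/(y-F)$-factors at $x$ and at $x+\xi$, and the tool you invoke --- monotonicity of $f/g$ on $(\alpha_1,d)$ --- is not available on $(0,\alpha_1)\subset(0,\xi)$ and says nothing about $g(x)$ versus $g(x+\xi)$ (Theorem B assumes $g$ nondecreasing for precisely this comparison). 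Until $\Delta(\xi)>0$, or an equivalent invariant-annulus statement, is actually established, both the ``at least one'' and the ``exactly two'' conclusions remain unproved; the remedy closest to your setup is to replace the orbit through $(\xi,0)$ by the closed comparison curve built from energy arcs over $[0,\xi]$ and $[\xi,2\xi]$ as in the proof of Theorem B, on which the transversality computation reduces exactly to the sign of $F(x)+F(x+\xi)$.
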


\begin{remark}
  Theorem \ref{mainresult2} provides sufficient conditions for the presence of exactly two limit cycles and the explicit upper bound for the amplitude of the two limit cycles. To the best knowledge of the authors, there is no existing result related to the amplitude of the two limit cycles.
  Moreover, Theorem \ref{mainresult2} can help us to estimate the position of the double-limit-cycle bifurcation surface in the parameter space.
\end{remark}
\begin{remark}
To show the effectiveness of Theorem \ref{mainresult2}, in Application 1 in Section 5, we determine the amplitude of the two limit cycles for a generalized co-dimension-3 Li\'enard oscillator, and we estimate the position of the double-limit-cycle bifurcation surface for this case.
\end{remark}

The remainder of the paper is organized as follows.
The preliminary results on local results and the criterion of multiplicity and stability of limit cycles are provided in Section 2.
Sections 3 and 4 present the proofs of Theorems \ref{mainresult} and \ref{mainresult2}, respectively. Finally, applications and examples are provided to show the effectiveness of our result. A complete bifurcation diagram of a generalized co-dimension-3 Li\'enard oscillator in the entire parameter space is shown in Application 1. Application 2 is presented to study the limit cycles of a class of the Filippov system, which is discontinuous.

\section{Preliminaries}
In this section, to prove the main result, we need the following preliminary lemmas.

\begin{lemma}
In the interval $|x|\leq d$, the initial value problem of system \eqref{initial} has a unique solution,
and the origin is a sink.
\label{unique}
\end{lemma}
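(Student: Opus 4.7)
The plan splits into two tasks: (i) existence and uniqueness of the IVP, and (ii) asymptotic stability of the origin. Both hinge on the fact that the only potential irregularity of the vector field is the line $\{x=0\}$, where $g$ may have a jump of size $2c$, whereas $F$ is continuous and $F(0)=0$ by the oddness in (H1), so the $x$-component $\dot x=y-F(x)$ is globally continuous.

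For uniqueness, I would first apply classical Picard--Lindel\"of on each open half-plane $\{x>0\}$ and $\{x<0\}$, where (H1) and (H3) make both components of the field Lipschitz. On the discontinuity line $\{x=0\}$ with $y\neq 0$, we have $\dot x=y-F(0)=y\neq 0$ and this expression agrees on both sides, so Filippov's convex-hull construction yields no sliding mode: orbits cross $\{x=0\}$ transversally, and the two one-sided Lipschitz arcs concatenate into a unique trajectory. The delicate case is the origin, where Filippov's regularization a priori allows upward and downward drifts along the $y$-axis. To close this loophole I introduce the energy
\[
V(x,y)=\tfrac{1}{2}y^{2}+G(x),\qquad G(x)=\int_{0}^{x}g(s)\,ds=G_{0}(x)+c|x|,
\]
which by (H3) is continuous and positive-definite near the origin and satisfies $\dot V=-g(x)F(x)$ on $\{x\neq 0\}$. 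Since $F$ and $g$ are both odd with $F(x)>0$ on $(0,\beta_{1})$ and $g(x)>0$ on $(0,d)$, the product $g(x)F(x)$ is strictly positive on $(-\beta_{1},\beta_{1})\setminus\{0\}$. Any Carath\'eodory solution that left $(0,0)$ would therefore drive $V$ strictly below $V(0,0)=0$, contradicting $V\ge 0$; hence the trivial equilibrium solution is the only one issuing from the origin, and uniqueness holds everywhere on $\{|x|\le d\}$.

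The same Lyapunov pair delivers the sink property through a LaSalle-type argument: in the punctured disk $\{0<|x|<\beta_{1}\}$ we have the strict inequality $\dot V<0$, and the only invariant subset of $\{\dot V=0\}=\{x=0\}$ inside a small neighborhood of the origin is $\{(0,0)\}$, since $\dot x=y\neq 0$ on $\{x=0\}\setminus\{(0,0)\}$; any orbit starting near $(0,0)$ must therefore accumulate on $(0,0)$. I expect the main obstacle to be precisely the Filippov regularization at the origin---ruling out artificial sliding solutions that the convex hull in the $\dot y$ direction would otherwise permit. The Lyapunov calculation above is what handles this in one stroke, and once that point is noted the rest of the argument is routine concatenation and LaSalle bookkeeping.
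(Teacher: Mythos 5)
Your proposal is correct and follows essentially the same route as the paper: identify the crossing set $\{x=0,\ y\neq 0\}$ via the Filippov convex construction, and use the energy $\mathcal{E}(x,y)=\int_0^x g(s)\,ds+\tfrac12 y^2$ with $\dot{\mathcal{E}}=-g(x)F(x)<0$ for $0<|x|<\beta_1$ to handle the origin. The only cosmetic difference is that the paper phrases the decay of $\mathcal{E}$ as a comparison with the auxiliary Hamiltonian system $\dot x=y,\ \dot y=-g(x)$ (whose closed orbits are exactly the level curves of your $V$), whereas you phrase it as a LaSalle argument and additionally make explicit the uniqueness of the solution issuing from the origin, which the paper leaves implicit.
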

\begin{proof}
We prove this lemma with two cases.\\
\noindent{\bf Case 1.} If  $c=0$, i.e., $g(0)=0$, $g(x)$ is clearly Lipschitzian continuous for $x\in(-d,d)$.
Hence, the conclusion of this lemma holds.

\noindent{\bf Case 2.} If $c>0$, $\lim_{x\to0^+}g(x)=c>0$, we have $\lim_{x\to0^-}g(x)=-c<0$ by $g(-x)=-g(x)$.
In other words,
\[
\Sigma=\{(x,y)\in\mathbb{R}^2|x=0\}
\]
is a discontinuity boundary.
 Now, we recall the Filippov convex method (\cite{Filippov,KRG}). We can construct general solutions based on the standard solutions in regions $x<0$ and $x>0$
and sliding solutions on $\Sigma$.
Let
\begin{eqnarray*}
\delta=((H_x,H_y),(y-F(x),-g(x))|_{x\to0^-}\cdot((H_x,H_y),(y-F(x),-g(x))|_{x\to0^+}=y^2,
\end{eqnarray*}
where $H(x,y)=x$.
Therefore, the crossing set is
\[
\Sigma_c=\{(x,y)\in\mathbb{R}^2|x=0, y\neq0\}.
\]
Now, we must discuss the origin.
Since $(0,0)$ lies in the discontinuous line $x=0$,
there is no Jacobian matrix of $(y-F(x),-g(x))$ at $(0,0)$.
Let
\begin{eqnarray}
\mathcal{E}(x,y)=\int_0^xg(s)ds+\frac{y^2}{2}=\int_0^xg_0(s)ds+c|x|+\frac{y^2}{2}.
\label{Exy}
\end{eqnarray}
It is clear that
\begin{eqnarray}
\frac{d\mathcal{E}}{dt}=-g(x)F(x)<0,~{\rm for}~0<x<\beta_1.
\label{dExy}
\end{eqnarray}
Now, we give an assertion: any orbit $\varphi(t;0,y_0)$ of Eq. \eqref{initial} either returns to a point $(0,y_1)$ 
after time $t_0$ or directly approaches the origin, where $y_0,y_1$
are small, and $0<y_1<y_0$.
Then, to prove that the origin is a sink, it suffices to show the above assertion.
Consider the following auxiliary Hamiltonian system
\begin{eqnarray}
\left\{\begin{array}{l}
                  \displaystyle \frac{dx}{dt}=y, \\
                  \displaystyle \frac{dy}{dt}=-g(x).
                 \end{array}\right.
\label{Hami}
\end{eqnarray}
Let
$\phi(t;0,y_0)$ be an orbit of Eq. \eqref{Hami}, which passes through the same initial point $(0,y_0)$.
$\phi(t;0,y_0)$ is clearly a closed orbit.
By comparing the theorem and the signs of systems \eqref{initial} and \eqref{Hami},
we have the following: the positive orbit $\varphi(t;0,y_0)$ lies in the interior of $\phi(t;0,y_0)$ when $t>0$.
The above assertion is proven. Consequently, the proof of this lemma is complete.
\end{proof}

To prove Theorem \ref{mainresult}, we must have the following lemma to guarantee the multiplicity and stability of limit cycles for system \eqref{initial}  when $g(x)$ may not be $C^1$ and even discontinuous at $x=0$.
\begin{lemma}
Suppose that $F(x)$  satisfies $F'(x)\in C^0(-d,d)$, and $g(x)$ satisfies condition (H3).
Assume that system \eqref{initial} has a limit cycle $\gamma$.
Furthermore, if the following inequality holds,
\begin{eqnarray}
\oint_{\gamma}{\rm div}(y-F(x),-g(x))dt<0\,\,\,({\rm resp}. >0),
\label{int}
\end{eqnarray}
then, $\gamma$ is a simple limit cycle, which is stable. On the contrary, if
\[
\oint_{\gamma}{\rm div}(y-F(x),-g(x))dt>0,
\]
then, $\gamma$ is a simple limit cycle, which is unstable.
\label{multi}
\end{lemma}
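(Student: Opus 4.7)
The plan is to compute the nontrivial Floquet multiplier of $\gamma$ on a transverse Poincar\'e section and show it equals $\exp\bigl(\oint_\gamma{\rm div}(y-F(x),-g(x))\,dt\bigr)$; the stated sign condition then forces hyperbolicity (hence simplicity) together with the asserted stability type. Observe first that ${\rm div}(y-F(x),-g(x))=-F'(x)$ is continuous on $(-d,d)$ by hypothesis, so the only obstruction to the classical smooth argument is the jump of $g$ across $\Sigma=\{x=0\}$ when $c>0$. When $c=0$ the system is globally Lipschitz and the standard variation-of-constants argument for the Poincar\'e map applies at once, so I focus on the case $c>0$.

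By Lemma \ref{unique}, $\gamma$ is a crossing orbit surrounding the unique equilibrium $(0,0)$, and hence it meets $\Sigma$ transversally at exactly two points $p_\pm=(0,y_\pm)$ with $y_+>0>y_-$ (transversality from $\dot x=y\neq 0$ on the crossing set $\Sigma_c$). I would split the period of $\gamma$ into the two smooth arcs in $\{x>0\}$ and $\{x<0\}$; on each arc the variational equation is classical and its fundamental matrix $\Phi_\pm$ satisfies Liouville's formula
\[
\det\Phi_\pm=\exp\int_{\mathrm{arc}_\pm}{\rm div}(y-F(x),-g(x))\,dt.
\]
At each crossing $p_\pm$ I would insert the planar Filippov saltation matrix
\[
S=I+\frac{(f^+-f^-)(\nabla H)^T}{(\nabla H)^T f^-}\bigg|_{p_\pm}, \qquad H(x,y)=x,
\]
relating the linearized flow just before and just after the jump. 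Using $F(0)=0$ (from $F(-x)=-F(x)$) and $g(0^\pm)=\pm c$, one finds $(\nabla H)^T f^\pm = y_\pm$ while $f^+-f^-$ is purely vertical; hence $S$ is lower triangular with unit diagonal, and $\det S=1$ at both crossings.

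Consequently the monodromy matrix $M=S_-\Phi_-S_+\Phi_+$ satisfies $\det M=\exp\bigl(\oint_\gamma{\rm div}(y-F(x),-g(x))\,dt\bigr)$. The identity $Sf^-=f^+$, valid at each crossing, shows that the tangent vector field to $\gamma$ is a $+1$-eigenvector of $M$, so the remaining (transverse) Floquet multiplier equals $\det M$, and this coincides with the derivative of the first-return map on any section transverse to $\gamma$. Thus $\oint_\gamma{\rm div}\,dt<0$ forces this multiplier into $(0,1)$, making $\gamma$ a hyperbolic, hence simple, stable limit cycle, while the reverse inequality yields a simple unstable one. The most delicate point is the justification of the planar saltation formula in this Filippov setting; it is classical (cf.\ \cite{Filippov}), and if full self-containedness is required I would verify it by writing the first-return map near $p_\pm$ in local coordinates $(x,\,y-y_\pm)$, solving implicitly for the crossing time of a perturbed orbit and expanding to first order in the transverse displacement.
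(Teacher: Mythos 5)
Your route is genuinely different from the paper's, and your treatment of the actual discontinuity at $x=0$ is sound. The paper never touches saltation matrices: it performs the Li\'enard-specific change of variables $u=\mathrm{sgn}(x)\sqrt{2\int_0^x g(s)\,ds}$, $d\tau=(g(x)/u)\,dt$, which carries \eqref{initial} into $du/d\tau=y-\hat F(u)$, $dy/d\tau=-u$ while preserving the divergence integral along the cycle, and then quotes the classical stability criterion (Theorem 2.2 of Chapter 4 of \cite{Zh}) for the transformed cycle $\hat\gamma$. You instead compute the transverse Floquet multiplier directly; your observation that $f^+-f^-$ is purely vertical while $H(x,y)=x$, so that each saltation matrix is unipotent lower triangular with $\det S=1$ (using $F(0)=0$ and $(\nabla H)^Tf^-=y_\pm\neq0$ on the crossing set), is exactly why the switching contributes nothing to $\det M$, and the identity $Sf^-=f^+$ correctly isolates the trivial multiplier. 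As a way of handling the jump of $g$, this is a legitimate and arguably more transparent alternative.

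The gap is in the sentence ``on each arc the variational equation is classical.'' Condition (H3) only makes $g_0$ Lipschitz on $(-d,d)$; it is not assumed to be $C^1$ on $\{x>0\}$ or $\{x<0\}$. The Jacobian of the vector field contains the entry $-g'(x)$, which then exists only almost everywhere; the flow restricted to a half-plane need not be differentiable with respect to initial conditions, so neither the fundamental matrices $\Phi_\pm$ nor the identification of $M$ with the derivative of the first-return map is justified. (That the divergence $-F'(x)$ happens not to involve $g'$ does not help: Liouville's formula presupposes that $\Phi_\pm$ exists as the derivative of the flow in the first place.) This regularity issue is precisely what the paper's normalization is designed to kill: in the $(u,y)$ coordinates the second component of the field is the linear function $-u$, so no derivative of $g$ ever enters and only $F'\in C^0$ is needed. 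To close your argument you must either strengthen the hypothesis to $g_0\in C^1$ away from the origin (which would suffice for all the applications in Section 5 but not for the lemma as stated), or precede your saltation computation by the same change of variables $u=\mathrm{sgn}(x)\sqrt{2\int_0^x g(s)\,ds}$ and apply your monodromy argument to the transformed system.
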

\begin{proof}
By Lemma \ref{unique}, the initial value problem of system \eqref{initial} has a unique solution, although $g(x)$ is discontinuous at $x=0$.

Taking the following transformation
\begin{eqnarray}
u=h(x):={\rm sgn}(x)\sqrt{2\int_0^xg(s)ds}, ~~~
d\tau :=\frac{g(x)}{u}dt,
\label{tran}
\end{eqnarray}
we rewrite Eq. \eqref{initial} as follows
\begin{eqnarray}
\left\{
\begin{array}{l}
\displaystyle \frac{du}{d\tau} =y-\hat F(u), \\
\displaystyle\frac{dy}{d\tau}= -u,
\end{array}\right.
\label{simeq}
\end{eqnarray}
where $\hat F(u):=F(h^{-1}(u))$.
On the one hand, \eqref{tran} is a homeomorphic transformation except for the $y$-axis.
Suppose that Eq. \eqref{initial} has a limit cycle $\gamma$.
By the transformation \eqref{tran}, 
$\gamma$ can be changed into $\hat\gamma$ of \eqref{simeq}.
Thus, the stability of $\gamma$ is equivalent to $\hat\gamma$.
On the other hand, the vector field of \eqref{simeq} is clearly $C^1$ for $c=0$.
For $c\neq0$, the vector field of \eqref{simeq} is Lipschitz continuous, and it is $C^1$  except for the line $x=0$.
According to the transformation \eqref{tran} and \eqref{int2},
\begin{eqnarray}
\oint_{\hat\gamma}{\rm div}(y-\hat F(u),-u)d\tau=\oint_{\gamma}{\rm div}(y-F(x),-g(x))dt<0,\,\,({\rm resp}. >0).
\label{int2}
\end{eqnarray}
By Theorem 2.2 of \cite[Chapter 4]{Zh},
$\hat\gamma$ is a stable (resp. unstable) and simple limit cycle by \eqref{int2}.
Consequently, $\gamma$ is a stable (resp. unstable) and simple limit cycle.
 The proof is complete.
\end{proof}

\begin{remark}
It is difficult to provide the criterion to determine the stability of limit cycles for a general discontinuous system, even if there is a one discontinuous line for a general vector field. For example, 
Liang et al. \cite{LHZ} showed a criterion to determine the multiplicity and stability of limit cycles for planar piecewise smooth integrable systems.
However, the main purpose of this paper is to study the limit cycles of the Li\'enard system \eqref{initial} allowing for discontinuity.
 Thus, our aim of Lemma \ref{multi} is to give the criterion to determine the multiplicity and stability of limit cycles for system \eqref{initial}  when $g(x)$ may not be $C^1$, even discontinuous at $x=0$. In this case, we find the relationship between the stability and the divergence \eqref{int}.
We also remark that we do not require that either $g(x)$ or $F(x)$ is an odd function.
 \end{remark}

 In the following, we discuss the number of limit cycles in $[-\alpha_1,\alpha_1]$.
\begin{lemma}
In the interval $|x|\leq \alpha_1$,  there is at most one limit cycle system of \eqref{initial}.
Moreover, this limit cycle is unstable if it exists.
\label{ulc}
\end{lemma}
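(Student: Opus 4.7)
The plan is to combine three observations: the origin is the only equilibrium and is a sink (Lemma \ref{unique}), every closed orbit must therefore encircle it, and inside $\{|x|\leq\alpha_1\}$ the sign structure of $F$ forces every limit cycle to be simple and unstable.

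First I would rule out closed orbits contained entirely in $\{|x|<\beta_1\}$: here $g(x)F(x)>0$ for $x\neq 0$, so the energy $\mathcal{E}$ in \eqref{Exy} satisfies $\dot{\mathcal{E}}=-g(x)F(x)<0$, which is incompatible with a closed orbit. Consequently any limit cycle $\gamma\subset\{|x|\leq\alpha_1\}$ must intersect the strip $\beta_1\leq|x|\leq\alpha_1$, and by the $(x,y)\mapsto(-x,-y)$ symmetry of the system it does so on both sides (either because $\gamma$ itself is invariant under the symmetry, or because its symmetric image is a distinct nested cycle inside $\{|x|\leq\alpha_1\}$).

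Next I would establish that the divergence integral $\oint_\gamma(-F'(x))\,dt$ is strictly positive for every such $\gamma$. I would split the cycle into the portion in $|x|\in(\beta_1,\alpha_1]$, where $F'\leq 0$ by (H2) and the first step guarantees a nontrivial, strictly positive contribution, and the portion in $|x|<\beta_1$, where $F'$ has no definite sign. To control the inner contribution I would apply the homeomorphism $u=h(x)$ of \eqref{tran} to pass to the Li\'enard system \eqref{simeq}, whose $\hat F$ is $C^1$ on $(h(\beta_1),h(\alpha_1))\cup(h(\alpha_1),h(d))$ and shares the sign pattern of $F$. On arcs of the transformed cycle lying in $|u|<h(\beta_1)$, switching the integration variable to $u$ via $d\tau=du/(y-\hat F(u))$ and using $\hat F(0)=\hat F(h(\beta_1))=0$ together with the oddness of $\hat F$, an integration by parts recasts the inner piece as a manifestly nonnegative quantity. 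The divergence criterion of Lemma \ref{multi} then concludes that $\gamma$ is a simple unstable limit cycle.

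Uniqueness follows by a Poincar\'e--Bendixson argument. Two nested limit cycles $\gamma_1\subset\gamma_2\subset\{|x|\leq\alpha_1\}$ would both be simple and unstable by the preceding step; the annulus between them would contain no equilibrium and, again by the preceding step, no stable closed orbit, contradicting the existence of an attracting $\omega$-limit set in an annular region whose two boundary cycles both repel orbits into the annulus. The main obstacle is the divergence computation — specifically, controlling the contribution from the arcs in $|x|<\beta_1$ where $F'$ may change sign. This is precisely what the change of variable $u=h(x)$ is designed to tame: it flattens the $g$-term and lets us exploit the vanishing of $\hat F$ at the endpoints of the inner strip through the integration by parts sketched above.
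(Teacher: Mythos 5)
Your first step (no closed orbit in $|x|\le\beta_1$, via $\dot{\mathcal E}=-g(x)F(x)<0$) matches the paper. The gap is in your central step: you assert that for a single limit cycle $\gamma\subset\{|x|\le\alpha_1\}$ one can prove $\oint_\gamma F'(x)\,dt<0$ and then invoke Lemma \ref{multi}. The contribution from $\beta_1\le|x|\le\alpha_1$ is indeed of the right sign by (H2), but on $|x|<\beta_1$ the hypotheses control only the sign of $F$, not of $f=F'$ --- this is precisely the point of the theorem, since $f$ may change sign arbitrarily often in $(0,\beta_1)$. Your proposed repair (pass to \eqref{simeq} and integrate by parts using $\hat F(0)=\hat F(h(\beta_1))=0$) does not produce a ``manifestly nonnegative quantity'': writing the inner contribution of an upper arc as $\int_0^{h(\beta_1)}\hat F'(u)\,w(u)^{-1}\,du$ with $w=y-\hat F>0$, integration by parts kills the boundary term but leaves $-\int_0^{h(\beta_1)}\hat F(u)\,w'(u)\,w(u)^{-2}\,du$, and $w'(u)=-u/w-\hat F'(u)$ reintroduces the sign-indefinite factor $\hat F'$; iterating the integration by parts does not terminate. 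Since the sign of the divergence integral of a single cycle is not established, neither the instability claim nor your Poincar\'e--Bendixson uniqueness step (which requires every cycle in the strip to be repelling) is justified.

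The paper avoids this entirely by a Levinson--Smith comparison of the energy dissipation rather than of the divergence: for two nested cycles $\gamma_1\subset\gamma_2$ in $|x|\le\alpha_1$ it shows, arc by arc on a half-cycle (using the symmetry $(x,y)\mapsto(-x,-y)$), that $\int_{\widehat{A_1B_1D_1}}d\mathcal E<\int_{\widehat{A_2B_2D_2}}d\mathcal E$. This comparison uses only $F>0$ on $(0,\beta_1)$ and $F<0$, $F'\le0$ on $(\beta_1,\alpha_1)$, never the sign of $F'$ on the inner interval; since both half-integrals must vanish for closed orbits, at most one cycle exists, and the same strict monotonicity of the energy increment of the half-return map yields the instability. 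To salvage your route you would have to replace the absolute sign claim on a single cycle by such a comparison between two nested orbits, which is essentially the paper's argument.
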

\begin{proof}
\begin{figure}[htp]
\centering
\scalebox{0.6}[0.6]{
\includegraphics{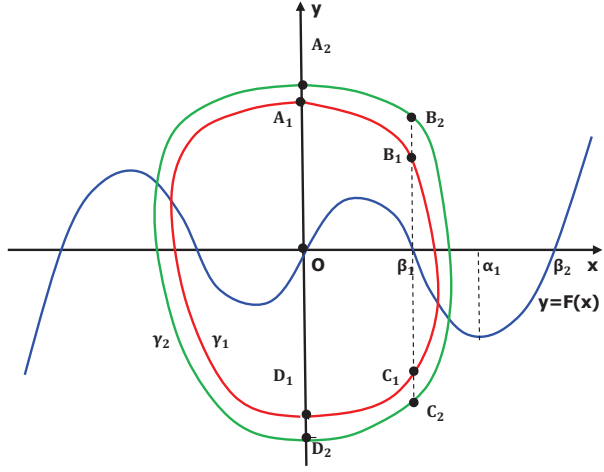}}
\caption{Discussion about limit cycles in $-\alpha_1\leq x \leq \alpha_1$ }
\label{lc1}
\end{figure}

Using contradiction, if it is not true, we can suppose that there are at least two limit cycles of system \eqref{initial} in the interval  $-\alpha_1\leq x \leq \alpha_1$,
where $\gamma_1$ and $\gamma_2$ are any two such limit cycles and $\gamma_1$ lies in the interior of $\gamma_2$. See Figure \ref{lc1}.
To proceed with the proof, firstly, we assert that there is no limit cycle in the interval  $-\beta_1\leq x \leq \beta_1$ for system \eqref{initial}.
If it is not the case, suppose that there is a limit cycle $\gamma$ of  system \eqref{initial} in this interval. Then,
it follows that
\begin{eqnarray}
\oint_{\gamma}d\mathcal{E}=0,
\label{E}
\end{eqnarray}
where $\mathcal{E}(x,y)$ is defined as \eqref{Exy}.
On the other hand, in view of \eqref{Exy}, we have $\oint_{\gamma}d\mathcal{E}<0$.
This contradicts \eqref{E}. Thus, the assertion holds. There is no limit cycle of system \eqref{initial} in $x\in[-\beta_1,\beta_1]$.

Note that the vector field $(y-F(x),-g(x))$ is symmetrical about the origin.
Therefore, we have
\begin{eqnarray}
\int_{\widehat{A_1B_1D_1}}d\mathcal{E}=\frac{1}{2}\oint_{\gamma_1}d\mathcal{E},~~
\int_{\widehat{A_2B_2D_2}}d\mathcal{E}=\frac{1}{2}\oint_{\gamma_2}d\mathcal{E}.
\label{equal0}
\end{eqnarray}
Next, we will show that the following inequality holds.
\begin{eqnarray}
\int_{\widehat{A_1B_1D_1}}d\mathcal{E}<
\int_{\widehat{A_2B_2D_2}}d\mathcal{E}.
\label{nonequal0}
\end{eqnarray}
The proof idea follows \cite{LS}.
We represent the orbit segments $\widehat{A_1B_1}$ and $\widehat{A_2B_2}$ by $y=y_1(x)$
and $y=y_2(x)$, respectively.
It follows that
\begin{eqnarray*}
\int_{\widehat{A_iB_i}}d\mathcal{E}=\int_0^{\beta_1}\frac{-g(x)F(x)}{y_i(x)-F(x)}dx,
\end{eqnarray*}
where $i=1,2$.
Thus,
\begin{eqnarray*}
\int_0^{\beta_1}\left(\frac{-g(x)F(x)}{y_1(x)-F(x)}-\frac{-g(x)F(x)}{y_2(x)-F(x)}\right)dx= \int_0^{\beta_1}\frac{-g(x)F(x)(y_2(x)-y_1(x))}{(y_1(x)-F(x))(y_2(x)-F(x))}dx<0.
\end{eqnarray*}
In other words, we have
\begin{eqnarray}
\int_{\widehat{A_1B_1}}d\mathcal{E}<
\int_{\widehat{A_2B_2}}d\mathcal{E}.
\label{nonequal01}
\end{eqnarray}
Similarly, we have
\begin{eqnarray}
\int_{\widehat{C_1D_1}}d\mathcal{E}<
\int_{\widehat{C_2D_2}}d\mathcal{E}.
\label{nonequal02}
\end{eqnarray}
We represent the orbit segments $\widehat{B_1C_1}$ and $\widehat{B_2C_2}$ by $x=x_1(y)$
and $x=x_2(y)$, respectively.
It is clear that
\begin{eqnarray*}
\int_{\widehat{B_iC_i}}d\mathcal{E}=\int_{y_{B_i}}^{y_{C_i}}F(x_i(y))dy,
\end{eqnarray*}
where $i=1,2$.
Since $F(x)<0$ in $(\beta_1,\alpha_1)$, $y_{B_1}<y_{B_2}$
and $y_{C_1}>y_{C_2}$, we have
\begin{eqnarray*}
\int_{y_{B_2}}^{y_{C_2}}F(x_2(y))dy>\int_{y_{B_1}}^{y_{C_1}}F(x_2(y))dy.
\end{eqnarray*}
Note that $x_2(y)>x_1(y)$ for $y_1\leq y\leq y_2$.
With the monotonic decrease in $F(x)$,
$F(x_2(y))<F(x_1(y))$ when $y_1\leq y \leq y_2$.
Hence, we can obtain
\begin{eqnarray}
\int_{y_{B_1}}^{y_{C_1}}F(x_1(y))dy-\int_{y_{B_1}}^{y_{C_1}}F(x_2(y))dy=\int_{y_{B_1}}^{y_{C_1}}(F(x_1(y))-F(x_2(y)))dy<0.
\label{nonequal03}
\end{eqnarray}
By (\ref{nonequal01}-\ref{nonequal03}), it follows that the inequality (\ref{nonequal0}) holds.
However, since $\gamma_1$ and $\gamma_2$ are limit cycles, we have
\begin{eqnarray*}
\frac{1}{2}\oint_{\gamma_1}d\mathcal{E}=\frac{1}{2}\oint_{\gamma_2}d\mathcal{E}=0,
\end{eqnarray*}
which, combined with (\ref{equal0}), lead to
\begin{eqnarray*}
\int_{\widehat{A_1B_1D_1}}d\mathcal{E}=
\int_{\widehat{A_2B_2D_2}}d\mathcal{E}=0.
\end{eqnarray*}
This contradicts the inequality (\ref{nonequal0}).
Thus, the proof of this lemma is complete.
\end{proof}

The proof idea of Lemma \ref{ulc} comes from \cite{LS}. We discuss the number of limit cycles in $[-d,d]$ in the following lemma.

\begin{lemma}
For system \eqref{initial}, there are at most two limit cycles that intersect $x=\alpha_1$ in the interval $|x|\leq d$.
\label{twolc}
\end{lemma}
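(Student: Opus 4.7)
The plan is to argue by contradiction. Suppose \eqref{initial} admits three limit cycles $\gamma_1 \subset \gamma_2 \subset \gamma_3$, each of which crosses the line $x = \alpha_1$. The central symmetry implied by (H1) and (H3) forces every $\gamma_i$ to be invariant under $(x,y) \mapsto (-x,-y)$, and since $f(x) = F'(x)$ is even, one has
\[
\oint_{\gamma_i} \mathrm{div}(y-F(x),-g(x))\, dt \;=\; -2 \int_{\gamma_i^R} f(x)\, dt,
\]
where $\gamma_i^R$ is the portion of $\gamma_i$ in $\{x \geq 0\}$. By Lemma \ref{multi} the sign of this divergence integral determines the stability of each limit cycle, and the classical Poincar\'e--Bendixson analysis of three nested limit cycles forces the signs to realize an alternating pattern (with any semistable middle cycle removable by an $\varepsilon$-perturbation that splits it into two simple ones). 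I will reach a contradiction by proving that $i \mapsto \oint_{\gamma_i} \mathrm{div}\, dt$ is in fact strictly monotonic in $i$.

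Next I would split each $\gamma_i^R$ into an inner arc $\gamma_i^{\mathrm{in}} \subset \{0 \leq x \leq \alpha_1\}$ and an outer arc $\gamma_i^{\mathrm{out}} \subset \{x \geq \alpha_1\}$, joined at the two intersection points $P_i^\pm = (\alpha_1, y_i^\pm)$ with $y_i^+ > y_i^-$. The nesting gives $y_i^+$ strictly increasing and $y_i^-$ strictly decreasing in $i$, so on any branch expressible as a graph $y = y_i(x)$ the outer cycle's upper branch lies strictly above, and its lower branch strictly below, the corresponding branches of the inner cycle. On segments monotonic in $x$ I rewrite $dt = dx/(y-F(x))$ and turn $\int f(x)\, dt$ into a Riemann integral in $x$; on segments monotonic in $y$ I use $dt = -dy/g(x)$. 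On $\gamma_i^{\mathrm{in}}$, the assumption $F'(x) \leq 0$ on $[\beta_1,\alpha_1]$ from (H2) combined with the branch comparison yields strict monotonicity of the inner contribution in $i$, with a definite sign.

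For $\gamma_i^{\mathrm{out}}$ I would invoke (H4). Under the first alternative (monotone $f$ on $[\alpha_1,d]$), the integrand $f(x)/(y-F(x))$ compares favourably between cycles via the $y$-branch inequality, and the outer cycle additionally sweeps a wider $x$-range over which $f$ has a fixed sign, so the outer contribution is also strictly monotonic. Under the second alternative I first subtract a suitable multiple of the vanishing identity $\oint_{\gamma_i} d\mathcal E = 0$, with $d\mathcal E = -g(x)F(x)\, dt$ from \eqref{Exy}, so that the outer-arc integrand acquires the factor $F(x) - F(\alpha_1)$; at this point the hypothesized monotone quantity $(F(x)-F(\alpha_1)) f(x)/g(x)$ controls the sign of what remains and again delivers strict monotonicity. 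Adding the inner and outer contributions with consistent sign yields the required strict monotonicity of $\oint_{\gamma_i} \mathrm{div}\, dt$ in $i$, contradicting the alternation forced by stability.

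The principal obstacle is the outer comparison under the second alternative of (H4): the Zhang-type bookkeeping that combines $\oint \mathrm{div}\, dt$ with $\oint d\mathcal E = 0$ must be executed carefully, since the outer arc necessarily contains a $y$-monotone segment through the rightmost point of $\gamma_i$ where the $dx$-parametrization breaks down and the two integrals have to be recombined using a common $y$-parametrization. A subsidiary technical point is that $g$ may be discontinuous at $x = 0$, but this causes no difficulty: the inner arcs meet $x = 0$ only at isolated points, and more structurally the entire argument can first be pushed through the homeomorphism \eqref{tran} into the smoothed form \eqref{simeq}, exactly as in the proof of Lemma \ref{multi}, before the cycle-by-cycle comparison is performed.
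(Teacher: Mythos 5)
Your overall strategy is the one the paper itself follows: argue by contradiction on nested cycles, halve each cycle by the central symmetry, split the half-cycle at $x=\alpha_1$, and show that $\oint_{\gamma}\mathrm{div}\,dt$ is strictly monotone in the nesting order (the paper's inequality \eqref{two}), using (H2) on the inner arcs and (H4) plus a Zhang/Dumortier--Rousseau comparison on the outer arc. The genuine gap is in your inner-arc step. The inner arcs occupy $0\le x\le\alpha_1$, and on $(0,\beta_1)$ hypothesis (H2) gives no sign for $F'$; indeed $F$ is only assumed Lipschitz there, and the paper explicitly allows $f$ to change sign infinitely often on $(0,\beta_1)$. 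The pointwise comparison you invoke, namely the sign of $F'(x)\bigl[(y_1(x)-F(x))^{-1}-(y_2(x)-F(x))^{-1}\bigr]$, is exactly the sign of $F'(x)$ and is therefore uncontrolled on $(0,\beta_1)$; so ``$F'\le0$ on $[\beta_1,\alpha_1]$ plus the branch inequality'' does not deliver a definite sign for the difference of inner contributions. Handling $(0,\beta_1)$ requires a different idea, e.g.\ the rewriting $F'/(y_i-F)=-\tfrac{d}{dx}\ln\bigl(y_i(x)-F(x)\bigr)-g(x)/(y_i(x)-F(x))^2$ with the logarithmic terms telescoping around the half-cycle by symmetry; this is precisely the step the paper delegates to ``similar arguments to \cite{Zhang}''.

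Two further points. First, strict monotonicity $I_1<I_2<I_3$ of the divergence integrals over three nested crossing cycles does not by itself contradict stability: the configuration $I_1<0=I_2<I_3$ with $\gamma_2$ semistable (internally unstable, externally stable) is consistent with Lemma \ref{multi} and with Poincar\'e--Bendixson. You gesture at splitting the semistable cycle by an $\varepsilon$-perturbation, but the contradiction requires a rotated vector field with the correct rotation sense (so that $\gamma_2$ splits rather than disappears) \emph{and} a perturbation preserving (H1)--(H4) so that the monotonicity persists for the perturbed system and excludes four nested cycles; the paper carries this out with $F+\kappa r$, $r$ supported in $|x|\ge\alpha_1$, but does so in the proof of Theorem \ref{mainresult}, not in this lemma, whose own proof only establishes \eqref{two}. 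Second, your device for the second alternative of (H4) --- adding a multiple of $\oint gF\,dt=0$ --- yields $\oint(F'+\lambda gF)\,dt$ and does not produce the factor $(F(x)-F(\alpha_1))f(x)/g(x)$; since this weakened hypothesis is the paper's main novelty over Theorem A, that outer-arc comparison is the step that most needs to be written out explicitly rather than sketched.
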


\begin{figure}[htp]
\centering
\scalebox{0.55}[0.55]{
\includegraphics{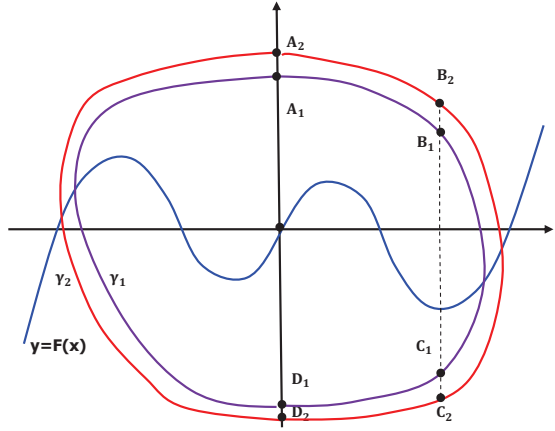}}
\caption{Discussion about the limit cycles in $-d\leq x \leq d$}
\label{lc2}
\end{figure}

\begin{proof}

Using contradiction, if Lemma \ref{twolc} is not true, then we can suppose that there are at least two limit cycles: $\gamma_1$ and $\gamma_2$. Without loss of generality, we suppose that $\gamma_1$ lies in the interior of $\gamma_2$ (see Figure \ref{lc2}).
Then, we can prove that the following inequality holds.
 \[
 \oint_{\gamma_1}{\rm div}(y-F(x),-g(x))dt> \oint_{\gamma_2}{\rm div}(y-F(x),-g(x))dt,
 \]
 or using another notation,
 \begin{eqnarray}
 \oint_{\gamma_1} F'(x)dt< \oint_{\gamma_2}F'(x)dt.
 \label{two}
 \end{eqnarray}
In fact, because the vector fields of system \eqref{initial} are symmetrical about the origin,
we have
\begin{eqnarray}
&&\oint_{\gamma_1}F'(x)dt=2\int_{ \widehat{A_1B_1D_1}}F'(x)dt,
\\
 &&\oint_{\gamma_2}F'(x)dt=2\int_{ \widehat{A_2B_2D_2}}F'(x)dt.
  \label{two1}
\end{eqnarray}
Thus, to prove the inequality \eqref{two}, it suffices to show
\begin{eqnarray}
\int_{ \widehat{A_1B_1D_1}}F'(x)dt<\int_{ \widehat{A_2B_2D_2}}F'(x)dt.
 \label{two2}
\end{eqnarray}
By similar arguments to \cite{Zhang}, we can prove the following two inequalities:
\begin{eqnarray}
\int_{ \widehat{A_1B_1}}F'(x)dt<\int_{ \widehat{A_2B_2}}F'(x)dt,
\\
\int_{ \widehat{C_1D_1}}F'(x)dt<\int_{ \widehat{C_2D_2}}F'(x)dt
 \label{two3}
\end{eqnarray}
By condition {\bf (H4)} and \cite[Lemma 4.5 of Chapter 4]{Zh} or \cite[Theorem 1]{DR},
it follows that
\begin{eqnarray}
\int_{ \widehat{B_1C_1}}F'(x)dt<\int_{ \widehat{B_2C_2}}F'(x)dt.
 \label{two4}
\end{eqnarray}
Therefore, according to (\ref{two1}-\ref{two4}), inequality \eqref{two} holds. However, by a similar proof to the inequality \eqref{nonequal0}, we have
\begin{eqnarray*}
\int_{\widehat{A_1B_1D_1}}d\mathcal{E}<
\int_{\widehat{A_2B_2D_2}}d\mathcal{E}.
\end{eqnarray*}
Consequently,
\[
 \oint_{\gamma_1}F'(x)dt> \oint_{\gamma_2}F'(x)dt,
 \]
 which contradicts \eqref{two}. Therefore, it is not true that system \eqref{initial} has at least two limit cycles, i.e.,
 the assertion of Lemma \ref{twolc} is true.
\end{proof}

\section{Proof of Theorem \ref{mainresult}}


{\bf Proof of Theorem \ref{mainresult}}

Based on the number of limit cycles lying  $[-\alpha_1,\alpha_1]$,
we divide the proof of Theorem \ref{mainresult} into the following two cases.

{\bf Case 1:} system \eqref{initial} has a limit cycle $\gamma_0$ lying $[-\alpha_1,\alpha_1]$.
Suppose that there are at least two limit cycles that intersect $x=\alpha_1$ for system \eqref{initial},
where $\gamma_1$ is the innermost limit cycle
and $\gamma_2$ is the outermost one.
According to the instability of $\gamma_0$, $\gamma_1$ is internally stable.
Considering Lemma \ref{multi},
we have
\[
\oint_{\gamma_1}F'(x)dt\geq0.
\]
Meanwhile, $\gamma_2$ is externally stable. According to Lemma \ref{multi},
we see that
\[
\oint_{\gamma_2}F'(x)dt\geq0.
\]
Moreover, there are two stable (unstable) limit cycles surrounding the origin and adjacent.
By (\ref{two}),
we have
\[
\oint_{\gamma_2}F'(x)dt>0.
\]
Moreover, system \eqref{initial} has no limit cycle between $\gamma_1$ and $\gamma_2$,
and we have
\[
\oint_{\gamma_1}F'(x)dt=0.
\]
Consider the following system
\begin{eqnarray}
\left\{\begin{array}{l}
                   \dot{x}=y-F_1(x), \\
                   \dot{y}=-g(x),
                 \end{array}\right.
\label{initial2}
\end{eqnarray}
where $F_1(x)=F(x)+\kappa r(x)$, $\kappa>0$ is sufficiently small and
\begin{eqnarray}
r(x)=
\left\{\begin{array}{l}
                  0,\ \ \ \ \ \ \ \ \ \ \ \ \ \ \ \ \ \ \ \ \ \ {\rm for}~~|x|<\alpha_1, \\
                  {\rm sgn}(x)(|x|-\alpha_1)^3,~~{\rm for}~~|x|\geq\alpha_1.
                 \end{array}\right.
\label{initial2}
\end{eqnarray}
Since system \eqref{initial2} satisfies the conditions for the uniqueness of solutions to initial value problems according to Lemma \ref{unique},
system \eqref{initial2} is a generalized rotated vector field about $\kappa$.
System \eqref{initial2} clearly reduces system \eqref{initial} as $\kappa=0$.
Thus, $\gamma_2$ splits into at least two limit cycles $\tilde \gamma_2$
and $\hat\gamma_2$, where $\tilde \gamma_2$
lies in the interior of $\hat\gamma_2$.
By \cite[Theorem 2.2 of Chapter 4]{Zh}, it follows that
\begin{eqnarray*}
\oint_{\tilde\gamma_1}F'(x)dt\geq0, \ \
\oint_{\hat\gamma_1}F'(x)dt\leq0,
\end{eqnarray*}
which contradicts \eqref{two}.
Hence, in this case, system \eqref{initial2} has at most one limit cycle intersecting $x=\alpha_1$.

{\bf Case 2:} system \eqref{initial} has no limit cycle in $[-\alpha_1,\alpha_1]$.
Since the origin is a sink and all orbits that cross any points at infinity are repelling,
 system \eqref{initial} has $0$ or even limit cycles intersecting $x=\alpha_1$.
 Hence, assume that system \eqref{initial} has even limit cycles intersecting $x=\alpha_1$,
 where $\gamma_1$ is the innermost limit cycle, and $\gamma_2$ is the outermost one.
 Thus, $\gamma_1$ is internally unstable, and $\gamma_2$ is externally stable.
In view of Lemma \ref{multi}, we have
  \begin{eqnarray*}
\oint_{ \gamma_1}F'(x)dt\leq0, \ \
\oint_{\gamma_2}F'(x)dt\geq0.
\end{eqnarray*}
According to \eqref{two}, system \eqref{initial} has at most three limit cycles $\gamma_1$, $\gamma_2$, and $\gamma_3$,
where
\begin{eqnarray*}
 \oint_{ \gamma_1}F'(x)dt<0, \ \oint_{ \gamma_3}F'(x)dt=0,  \
 \oint_{\gamma_2}F'(x)dt>0.
\end{eqnarray*}
It is clear that $\gamma_3$ is semistable.
Here, we omit the proof because it is identical to the previous case.
We complete the proof of Theorem \ref{mainresult}.
$\hfill{} \Box$

\section{Proof of Theorem \ref{mainresult2}}

To prove this theorem, we first recall the following definition and theorem of \cite{Zh} .
\begin{definition} (\cite[p.302]{Zh})
The two curves $y=F_1(x)$ and $y=F_2(x)$ satisfy the following conditions:
\begin{description}
\item[(1)] $y=F_1(x)$ and $y=F_2(x)$ have $n+2$ intersection points $(a_i,b_i)$, where $i=1,\ldots n+2$, $a=a_1<a_2<\ldots<a_{n+1}<a_{n+2}=b$
and $(-1)^{i+1}[F_2(x)-F_1(x)]\geq0$ for $a_i<x<a_{i+1}$.
\item[(2)] There are $\tau_{i+1}^j, \xi_{i+1}^j\in[a_{i+1},a_{i+2}]$ and $\xi_{i+1}^j\geq \tau_{i+1}^j$ such that
\subitem(i) $(-1)^{i+j}F_j(x)\geq0$ for $x\in[\tau_{i+1}^j,\gamma_{i+1}^j]\subset[a_{i+1},a_{i+2}]$,
 \subitem(ii) $(-1)^i[(-1)^jF_j(x)+(-1)^lF_l(x+{\bar \Delta}_{i+1}^l]\geq0$ and $\not\equiv0$ for $x\in[a_i,\tau_{i+1}^j]$,
 \\
 where $\Delta_{i+1}^l:=\tau_{i+1}^j-a_i$, ${\bar \Delta}_{i+1}^l:=\xi_{i+1}^j-a_i$, $\gamma_{i+1}^j:=\max_{j=1,2}(\xi_{i+1}^j+\Delta_{i+1}^j)$, $j\neq l$,
 $j,l=1,2$, $i=1,2,\ldots,n+1$.
\end{description}
Then, $y=F_1(x)$ and $y=F_2(x)$ are $n$-fold mutually inclusive in $[a,b]$.
\label{fmi}
\end{definition}
Now, we consider the following system
\begin{eqnarray}
\left\{\begin{array}{l}
                   \dot{x}=\varphi(y)-\tilde F(x), \\
                   \dot{y}=-\tilde g(x).
                 \end{array}\right.
\label{GLD}
\end{eqnarray}
Corresponding to system \eqref{GLD}, we make the following assumptions.
\begin{description}
\item[(a)] $\varphi(y), \tilde g(x), \tilde F(x)\in C^0(-d,d)$ for large $d>0$
and \eqref{GLD} satisfy the conditions for the uniqueness of solutions.
\item[(b)] $x\tilde g(x)>0$ for $x\neq0$, $\tilde g(x)$ is odd, and $\tilde g(x)$ is nondecreasing.
\item[(c)] $y\varphi(y)>0$ for $y\neq0$,   $\varphi(y)$ is increasing and $\lim{y\to\infty}\varphi(y)=\infty$.
\end{description}

For convenience, we restate a theorem in \cite{Zh} as follows.

\noindent {\bf Theorem B} (\cite[Theorem 5.9]{Zh})
Assume that the conditions {\bf (a-c)} hold, and $\tilde F(x)$ and $\tilde F(-x)$ are $n$-fold mutually inclusive in $[0,b]$.
Then, system \eqref{GLD} has at least $n$ limit cycles, where they intersect $[a_i,a_{i+1}]$.

{\bf
Proof of Theorem \ref{mainresult2}}
Since there are $\xi\in[\beta_1,\beta_2]$
and  $F(x)+F(x+\xi)<0$ for $(0,\xi)$,
according to Definition \ref{fmi}, $F(x)$ and $-F(-x)$ are $1$-fold mutually inclusive in the interval $[0,\beta_2]$.
According to Theorem B, system \eqref{initial} has at least one limit cycle in the interval $(-\beta_2,\beta_2)$.
Moreover, we can obtain that $y_H+y_I<0$.
$F(x)$ and $-F(-x)$ are not $2$-fold mutually inclusive in the interval $[0,x_0]$ for $\forall x_0>0$, i.e., we cannot obtain that system \eqref{initial} has at least two limit cycles according to Theorem B.

\begin{figure}[htp]
\centering
\scalebox{0.55}[0.55]{
\includegraphics{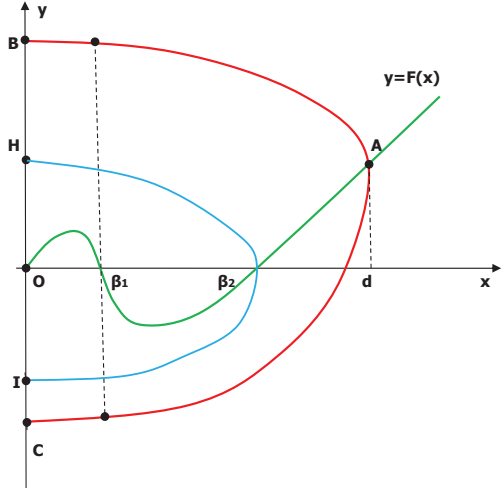}}
\caption{Discussion about  $y_B+y_C$}
\label{amplitude}
\end{figure}

Then, we discuss that $\int_{\beta_1}^dg(x)F(x)dx\geq0$.
Let $y=y_1(x)$ and $y=y_2(x)$ represent $\widehat{BA}$ and $\widehat{CA}$, respectively.
On the one hand,
for $x\in(\beta_1,\beta_2)$, it follows that
\[
y_1(x)-F(x)>y_1(\beta_2)-F(x)>y_1(\beta_2)-F(\beta_2).
\]
On the other hand,
for $x\in(\beta_2,d)$, it follows that
\[
y_1(x)-F(x)<y_1(\beta_2)-F(x)<y_1(\beta_2)-F(\beta_2).
\]
Let $v:=y_2(x)-F(x)$.
Then, we have
\[
\frac{dv}{dx}=-\frac{g(x)}{v}-f(x).
\]
It is clear that
\[
-\frac{g(x)}{v}-f(x)>0
\]
for $x\in(\beta_1,\alpha_1)$, as shown in Figure \ref{vx1}.
When $x\in(\beta_2,d)$, we claim that $v(x)=y_2(x)-F(x)$ lies upon $v=-g(x)/f(x)$.
We assume that $v(x)=y_2(x)-F(x)$ has a intersection point with $v=-g(x)/f(x)$.
Since $v=-g(x)/f(x)$ increases, $v(x)=y_2(x)-F(x)$ cannot intersect the $v$-axis.
This is a contradiction. Thus, $v(x)=y_2(x)-F(x)$ is increasing.
In other words, $v(x)<v(\beta_2)$ for $x\in(\beta_1,\beta_2)$
and $v(x)>v(\beta_2)$ for $x\in(\beta_2,d)$.

\begin{figure}[htp]
\centering
\scalebox{0.55}[0.55]{
\includegraphics{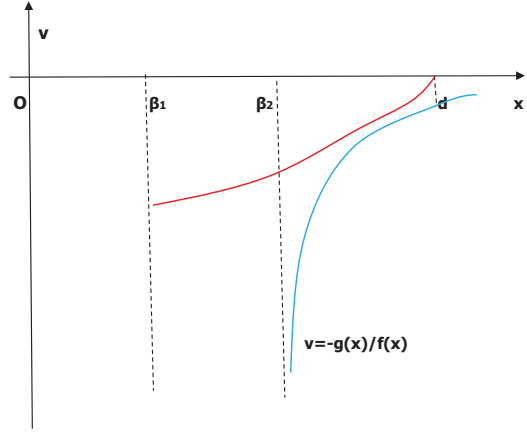}}
\caption{Discussion about the increase of $v(x)$}
\label{vx1}
\end{figure}

Let $u(x):=y_1(x)-F(x)$.
According to \eqref{Exy},
\begin{eqnarray*}
 \frac{d^2}{2}-\frac{\beta_1^2}{2}-\frac{y_1^2(\beta_1)}{2} &=&  \int_{\beta_1}^d\frac{-g(x)F(x)}{u(x)}dx
 \\
&=&  \int_{\beta_1}^{\beta_2}\frac{-g(x)F(x)}{u(x)}dx+\int_{\beta_2}^d\frac{-g(x)F(x)}{u(x)}dx
 \\
&>&  \int_{\beta_1}^{\beta_2}\frac{-g(x)F(x)}{u(\beta_2)}dx+\int_{\beta_2}^d\frac{-g(x)F(x)}{u(\beta_2)}dx
\\
&=&  -\frac{1}{u(\beta_2)}\int_{\beta_1}^dg(x)F(x)dx.
\end{eqnarray*}
Similarly, we can show that
\begin{eqnarray*}
\frac{\beta_1^2}{2}+\frac{y_2^2(\beta_1)}{2}- \frac{d^2}{2}
&=&  \frac{1}{v(\beta_2)}\int_{\beta_1}^dg(x)F(x)dx.
\end{eqnarray*}
Thus, we have
 \begin{eqnarray}
\frac{y_2^2(\beta_1)}{2}- \frac{y_1^2(\beta_1)}{2}<\left(\frac{1}{v(\beta_2)}-\frac{1}{u(\beta_2)}\right)\int_{\beta_1}^dg(x)F(x)dx\leq0.
\label{beta1}
\end{eqnarray}
Meanwhile, according to \eqref{Exy}, we have
 \begin{eqnarray}
\frac{\beta_1^2}{2}+\frac{y_1^2(\beta_1)}{2} -\frac{y_1^2(0)}{2} &=&   \int_0^{\beta_1}\frac{-g(x)F(x)}{u(x)}dx<0,
\nonumber\\
\frac{y_2^2(0)}{2}-\frac{\beta_1^2}{2}-\frac{y_2^2(\beta_1)}{2}  &=&   \int_0^{\beta_1}\frac{g(x)F(x)}{v(x)}dx<0.
\label{beta2}
  \end{eqnarray}
  According to \eqref{beta1} and \eqref{beta2},
  \[
  \frac{y_2^2(0)}{2}-\frac{y_1^2(0)}{2}<0,
  \]
i.e., $y_1(0)+y_2(0)>0$.
According to the Poincar\'e-Bendixson theorem, system \eqref{initial} has another limit cycle.
This theorem is proven.

\section{Applications and Examples}

\subsection{  Application 1: Limit cycles of a generalized co-dimension-3 Li\'enard oscillator}

Consider the following generalized co-dimension-3 Li\'enard oscillator
\begin{eqnarray}
\left\{\begin{array}{l}
                   \dot{x}=y-(ax+bx^3+x^5), \\
                   \dot{y}=-(cx+x^3),
                 \end{array}\right.
\label{GLO}
\end{eqnarray}
where $(a,b,c)\in\mathbb{R}^3$ (see \cite{LR,ZY2}).
In this section, we only discuss that system \eqref{GLO} has a unique equilibrium, i.e., $c\geq0$.\\

 When $|a|,|b|,|c|$ are small (i.e., system \eqref{GLO} can be changed into a near-Hamiltonian system),
limit cycles have been studied by \cite{LR}. However, for the general case of the parameters $|a|,|b|,|c|$ (particularly when the parameters are large), the upper bound of the limit cycles remains open. Therefore, we will provide a complete bifurcation diagram of system \eqref{GLO} in the parameter space $\mathbb{R}^3$.  Moreover, as mentioned in Theorem \ref{mainresult2}, we determine the amplitude of the two limit cycles, and we estimate the position of the double-limit-cycle bifurcation surface in the parameter space.

\begin{lemma}
When $a\geq0$ and $b\geq-2\sqrt{a}$, system \eqref{GLO} has no limit cycle.
When $a<0$, or $a=0$ and $b<0$, system \eqref{GLO} has a unique limit cycle,
which is stable.
\label{glo1}
\end{lemma}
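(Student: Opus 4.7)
The plan is to split the lemma into two mutually exclusive cases determined by the sign properties of $F$, proving non-existence via a Lyapunov function in the first case and existence plus uniqueness via Poincar\'e--Bendixson and a classical Zhang-type uniqueness theorem in the second. For \textbf{Case 1} ($a\geq 0$ and $b\geq -2\sqrt{a}$), I factor $F(x)=xQ(x^2)$ where $Q(u)=u^2+bu+a$, and check that $Q(u)\geq 0$ on $[0,\infty)$: when $-2\sqrt{a}\leq b\leq 2\sqrt{a}$ the discriminant $b^2-4a$ is nonpositive so $Q\geq 0$ everywhere, while when $b>2\sqrt{a}$ both real roots of $Q$ are negative so $Q$ is positive on $[0,\infty)$. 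Hence $xF(x)\geq 0$, with equality only at isolated points. With $\mathcal{E}(x,y)=\int_0^x g(s)\,ds+y^2/2$ as in \eqref{Exy}, this gives $\dot{\mathcal{E}}=-g(x)F(x)\leq 0$, strict except on a measure-zero set of times, so any periodic orbit $\gamma$ would give $0=\oint_\gamma d\mathcal{E}<0$, a contradiction.

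For \textbf{Case 2} ($a<0$, or $a=0$ and $b<0$), the quadratic $Q(u)$ has exactly one positive root $\beta_1^2=(-b+\sqrt{b^2-4a})/2$, so $F$ is odd with a unique positive zero $\beta_1$, negative on $(0,\beta_1)$ and positive on $(\beta_1,\infty)$. Applying the same analysis to $f(x)=5x^4+3bx^2+a$ gives a unique positive zero $\alpha_1$; using $a=-b\beta_1^2-\beta_1^4$ one computes $f(\beta_1)=2\beta_1^2\sqrt{b^2-4a}>0$, so $\alpha_1<\beta_1$ and $F$ is strictly increasing on $[\beta_1,\infty)$ with $F(x)\to+\infty$. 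Instability of the origin follows from $-f(0)=-a>0$ when $a<0$, and from $\dot{\mathcal{E}}>0$ in a punctured neighbourhood when $a=0$, $b<0$ (since $F(x)\sim bx^3<0$ and $g(x)=x^3>0$ for small $x>0$, together with the odd symmetry). For $|x|>\beta_1$ one has $\dot{\mathcal{E}}<0$, so a sufficiently large level curve of $\mathcal{E}$ together with the bounds on $F$ for $|x|\leq\beta_1$ produces a positively invariant annulus, and Poincar\'e--Bendixson yields at least one limit cycle.

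Uniqueness would follow from a classical Zhang-type uniqueness theorem for Li\'enard systems with a single positive zero of $F$ (see \cite[Chapter 4]{Zh}): the needed hypothesis reduces to checking that $f(x)/g(x)$ is nondecreasing on $(\beta_1,\infty)$, which is a polynomial inequality in $x^2$ whose coefficients simplify after eliminating $a$ via $a=-b\beta_1^2-\beta_1^4$. The stability of the unique limit cycle $\gamma$ then follows from Lemma~\ref{multi}, since uniqueness forces $\oint_\gamma F'(x)\,dt\geq 0$ and isolation upgrades this to strict inequality, giving $\oint_\gamma{\rm div}(y-F(x),-g(x))\,dt<0$. The main obstacle will be the monotonicity check for $f/g$ uniformly across the admissible parameter region for every $c\geq 0$, and the careful handling of the degenerate origin in the sub-case $a=0$, $b<0$ where the linearisation vanishes and instability must be extracted from the energy function rather than from the Jacobian spectrum.
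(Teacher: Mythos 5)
Your proposal is correct and follows essentially the same route as the paper: nonexistence via the energy function $\mathcal{E}$ and the sign of $a+bx^2+x^4$ on a putative closed orbit, and existence--uniqueness--stability in the second case via a classical Levinson--Smith/Zhang-type uniqueness theorem. The only difference is one of detail: the paper simply asserts that ``all conditions of \cite{LS} are clearly satisfied,'' whereas you explicitly verify the single positive zero of $F$, the inequality $\alpha_1<\beta_1$, the monotonicity of $F$ on $[\beta_1,\infty)$, and the repelling nature of the origin in the degenerate sub-cases, which is a useful (and needed) elaboration rather than a different argument.
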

\begin{proof}
When $a\geq0$ and $b\geq-2\sqrt{a}$, we obtain $a+bx^2+x^4\geq0$ for $\forall x\in\mathbb{R}$.
According to \eqref{dExy},
\begin{eqnarray*}
 \frac{dE(x,y)}{dt}=-g(x)F(x)=-x^2(c+x^2)(a+bx^2+x^4)\leq0.
\end{eqnarray*}
Suppose that there is a limit cycle $\gamma$ for system \eqref{GLO}.
Then, we have
\[
\oint_{\gamma}dE=\oint_{\gamma}-x^2(c+x^2)(a+bx^2+x^4)dt<0.
\]
This equation contradicts $\oint_{\gamma}dE=0$.
Thus, system \eqref{GLO} has no limit cycle.

When $a<0$, or $a=0$ and $b<0$,
it follows that $a+bx^2+x^4$ has a unique positive zero.
All conditions of \cite{LS} are clearly satisfied.
Hence, system \eqref{GLO} has a unique limit cycle,
which is stable.
\end{proof}

\begin{lemma}
When $a>0$ and $b<-2\sqrt{a}$, system \eqref{GLO} has at most two limit cycles.
\label{glo2}
\end{lemma}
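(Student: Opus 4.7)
The plan is to verify that system \eqref{GLO} satisfies hypotheses (H1)--(H4) of Theorem \ref{mainresult} under the assumption $a>0$, $b<-2\sqrt{a}$, so that the ``at most two limit cycles'' conclusion follows directly. Here $F(x)=ax+bx^3+x^5$, $g(x)=cx+x^3$, $f(x)=F'(x)=a+3bx^2+5x^4$, and since $g$ is an odd Lipschitz polynomial with $xg(x)=cx^2+x^4>0$ on $\{x\neq 0\}$ (as $c\geq 0$), condition (H3) holds trivially with $g_0(x)=cx+x^3$ and the auxiliary constant there set to $0$ (there is no discontinuity in this application).

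The first step is to identify $\beta_1,\beta_2,\alpha_1$. Writing $a+bx^2+x^4=(x^2-u_-)(x^2-u_+)$ with $u_\pm=(-b\pm\sqrt{b^2-4a})/2$, the condition $b<-2\sqrt{a}$ forces both $u_\pm$ to be real and strictly positive, so I set $\beta_1=\sqrt{u_-}$ and $\beta_2=\sqrt{u_+}$. Then $F(x)=x(x^2-\beta_1^2)(x^2-\beta_2^2)$, which is odd (giving (H1)) and satisfies the sign pattern required in (H2). The critical points of $F$ in $(0,\infty)$ come from $5u^2+3bu+a=0$ with $u=x^2$; the discriminant $9b^2-20a$ is positive (since $b^2>4a$ implies $9b^2>36a>20a$), and the two positive roots $u^{\pm}=(-3b\pm\sqrt{9b^2-20a})/10$ give the local maximum and local minimum of $F$ on the positive axis. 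I define $\alpha_1:=\sqrt{u^+}$, the local minimum in $(\beta_1,\beta_2)$, so that $F'(x)\leq 0$ on $[\beta_1,\alpha_1]$, completing (H2).

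For (H4), I intend to use the first alternative: $f$ nondecreasing on $[\alpha_1,d]$. Since $f'(x)=2x(10x^2+3b)$ is nonnegative on $\{x>0\}$ precisely for $x^2\geq -3b/10$, it suffices to check $\alpha_1^2\geq -3b/10$. But
\[
\alpha_1^2-\frac{-3b}{10}=\frac{-3b+\sqrt{9b^2-20a}}{10}-\frac{-3b}{10}=\frac{\sqrt{9b^2-20a}}{10}>0,
\]
so $f$ is strictly increasing on $[\alpha_1,\infty)$, and hence on $[\alpha_1,d]$ for any $d>\alpha_1$. Taking $d>\beta_2$ arbitrarily large, Theorem \ref{mainresult} yields at most two limit cycles in $|x|\leq d$. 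A brief last step is to note that every limit cycle of \eqref{GLO} is a priori bounded in $x$, because for $|x|$ large the quintic term in $F$ dominates and produces a negative divergence absorbing all orbits (alternatively, the auxiliary energy $\mathcal{E}$ is proper and eventually decreasing), so choosing $d$ larger than any such bound completes the argument.

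The only nontrivial calculation is the verification that $\alpha_1>\sqrt{-3b/10}$, which is the heart of (H4); the rest is structural bookkeeping, and the assumption $b<-2\sqrt{a}$ enters exactly in guaranteeing the discriminants $b^2-4a$ and $9b^2-20a$ are positive so that $\beta_1,\beta_2,\alpha_1$ are genuinely real and correctly ordered. I do not anticipate any serious obstacle, since once the canonical factorization of $F$ is in hand, all four hypotheses reduce to polynomial inequalities already implied by $a>0$ and $b<-2\sqrt{a}$.
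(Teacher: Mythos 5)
Your proposal is correct and follows the same overall strategy as the paper (verify (H1)--(H4) and invoke Theorem \ref{mainresult}), but you discharge the key hypothesis (H4) through its \emph{first} alternative, whereas the paper uses the second. The paper shows that $(F(x)-F(\alpha_1))f(x)/g(x)$ is nondecreasing on $[\alpha_1,d]$, which requires computing $F''(x)g(x)-F'(x)g'(x)$ and checking the sign of a degree-six polynomial for $x>\alpha_1$. You instead observe that $f'(x)=2x(10x^2+3b)$ and that $\alpha_1^2=\bigl(-3b+\sqrt{9b^2-20a}\bigr)/10>-3b/10$, so $f$ itself is already nondecreasing on $[\alpha_1,\infty)$ --- a one-line discriminant check. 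This is simpler and equally valid; it reflects the general fact that an even quartic with positive leading coefficient is increasing past its largest critical point, and $\alpha_1$ is by construction the largest zero of $f$, which lies beyond $f$'s last critical point $\sqrt{-3b/10}$. A side effect worth noting: your computation shows that condition (d) of the classical Theorem A is already satisfied for system \eqref{GLO}, so the full strength of the weakened (H4) is not actually needed for this lemma (it is needed elsewhere, e.g.\ in Example 1, where both $f$ and $f/g$ decrease). Your closing remark about a priori boundedness of limit cycles so that $d$ can be taken large is a legitimate point that the paper glosses over. The only items you assert without detailed proof are the orderings $0<\beta_1<\alpha_1<\beta_2$, but these follow at once from Rolle's theorem applied to $F$ on $[0,\beta_1]$ and $[\beta_1,\beta_2]$, and the paper omits them as well.
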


\begin{proof}
Since $F(x)=ax+bx^3+x^5$, condition (H1) clearly holds.
When $a>0$ and $b<-2\sqrt{a}$, $F(x)=ax+bx^3+x^5$ has exactly two positive zeros $\beta_1,\beta_2$,
where
\[
\beta_1=\sqrt{\frac{-b-\sqrt{b^2-4a}}{2}},~~\beta_2=\sqrt{\frac{-b+\sqrt{b^2-4a}}{2}}.
\]
It is also clear that conditions (H2) and (H3) hold.

Now, we can also compute that
$F'(x)=a+3bx^2+5x^4$ has exactly two positive zeros $ x_1, x_2$,
i.e.,
\[
 x_1=\sqrt{\frac{-3b-\sqrt{9b^2-20a}}{10}},~~ x_2=\sqrt{\frac{-3b+\sqrt{9b^2-20a}}{10}}.
\]
Hence, $F'(x)=5(x^2- x_1^2)(x^2- x_2^2)$.
It is obvious that
 $F''(x)g(x)-F'(x)g'(x)=5[x^6+(x_1^2+x_2^2)x^4-3x_1^2x_2^2]+5c[3x^4-(x_1^2+x_2^2)x^2-x_1^2x_2^2]>0$
 for $x>x_2$.
Thus,
 \begin{eqnarray*}
  \frac{d[(F(x)-F(x_2))F'(x)/g(x)]}{dx}=\frac{F'(x)^2}{g(x)}+\frac{(F(x)-F(x_2))[F''(x)g(x)-F'(x)g'(x)]}{g^2(x)}>0
 \end{eqnarray*}
 for $x>x_2$.
Consequently, condition (H4) holds.
Thus, system \eqref{GLO} has at most two limit cycles.
\end{proof}

\begin{figure}[htp]
\centering
\scalebox{0.66}[0.66]{
\includegraphics{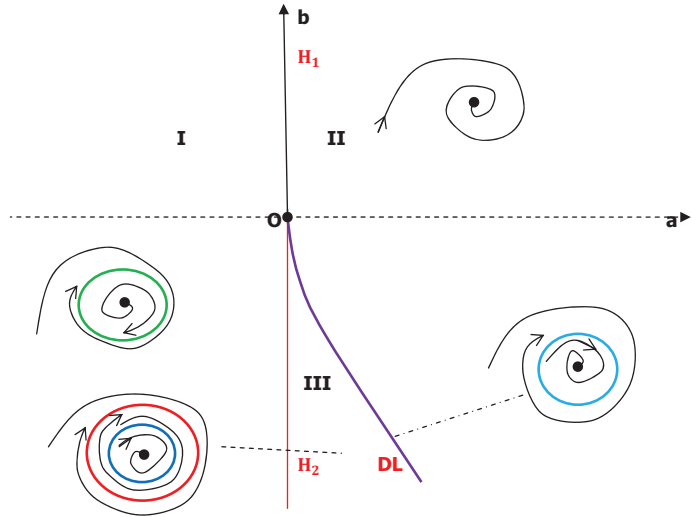}}
\caption{Bifurcation diagram of system \eqref{GLO} for a given $c>0$}
\label{GLObd}
\end{figure}

 Now we can state a result on the bifurcation diagram of system \eqref{GLO}.

\begin{theorem}
The bifurcation diagram of system \eqref{GLO} is shown in Figure \ref{GLObd},
where
\begin{eqnarray*}
I &=& \{(a,b,c)\in\mathbb{R}^2\times\mathbb{R}^+:  a<0\},
\\
II &=& \{(a,b,c)\in\mathbb{R}^2\times\mathbb{R}^+: a>0, b>\varphi(a)\},
\\
III &=& \{(a,b,c)\in\mathbb{R}^2\times\mathbb{R}^+: a>0, b<\varphi(a)\},
\\
H_1 &=& \{(a,b,c)\in\mathbb{R}^2\times\mathbb{R}^+: a=0, b\geq0\},
\\
H_2&=& \{(a,b,c)\in\mathbb{R}^2\times\mathbb{R}^+: a=0, b<0\},
\\
DL &=& \{(a,b,c)\in\mathbb{R}^2\times\mathbb{R}^+: b=\varphi(a,c)\},
\end{eqnarray*}
  $-5\sqrt{a}/2<\varphi(a,c)<-2\sqrt{a}$ and $\varphi(a,c)$ is a decreasing function about $a$.
\label{glopro}
\end{theorem}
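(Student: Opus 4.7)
I would fix $a$ and $c>0$ and study the one-parameter family obtained by varying $b$, combining Lemmas~\ref{glo1}--\ref{glo2}, the theory of generalized rotated vector fields, and the existence criterion of Theorem~\ref{mainresult2}.

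First, observe that \eqref{GLO} is a generalized rotated family in $b$: with $X_b=(y-F(x),-g(x))$ and $F(x)=ax+bx^3+x^5$ one has
\[
\det\begin{pmatrix} y-F(x) & -g(x) \\ -x^3 & 0 \end{pmatrix}=-x^3 g(x)=-x^4(c+x^2)\le 0,
\]
vanishing only on the $y$-axis. Hence as $b$ decreases every hyperbolic stable limit cycle expands monotonically and every hyperbolic unstable one contracts, and limit cycles can appear or disappear only in pairs via a semistable (double) limit cycle. An analogous computation with $\partial_a X=(-x,0)$ yields the same monotonicity in $a$, from which the claimed monotonicity of $\varphi(a,c)$ in $a$ will follow.

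Next I assemble the regions. Lemma~\ref{glo1} covers $I\cup H_2$ (exactly one stable limit cycle) and also the half-space $\{a\ge 0,\,b\ge -2\sqrt{a}\}\supset II\cup H_1$ (no limit cycle); Lemma~\ref{glo2} bounds the number of limit cycles by two on $\{a>0,\,b<-2\sqrt{a}\}$. Combining these with the rotated-field monotonicity in $b$, as $b$ decreases from $-2\sqrt{a}$ no limit cycle can be created except through a semistable one; define $\varphi(a,c)$ as the infimum of the values of $b$ for which no limit cycle exists. Then $\varphi(a,c)\le -2\sqrt{a}$, at $b=\varphi(a,c)$ the surface $DL$ consists of a single semistable cycle, and on $b<\varphi(a,c)$ the at-most-two bound forces exactly two hyperbolic limit cycles (the inner unstable, the outer stable) with no further bifurcation. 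This settles the qualitative content of Figure~\ref{GLObd} once existence is established.

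The main obstacle is the existence of the bifurcation together with the sharp estimate $\varphi(a,c)>-5\sqrt{a}/2$. For this I would apply Theorem~\ref{mainresult2}. Using the explicit roots
\[
\beta_1^2=\tfrac{-b-\sqrt{b^2-4a}}{2},\qquad \beta_2^2=\tfrac{-b+\sqrt{b^2-4a}}{2},
\]
the hypothesis $\beta_2\ge 2\beta_1$ reduces, after squaring, to $25(b^2-4a)\ge 9b^2$, i.e., $b\le -5\sqrt{a}/2$; this is precisely where the lower bound on $\varphi$ enters. On this window I would then verify the remaining hypotheses: the monotonicity of $f/g=(a+3bx^2+5x^4)/(x(c+x^2))$ on $[\alpha_1,d]$ by direct differentiation (using the sign information already exploited in the proof of Lemma~\ref{glo2}); the existence of $\xi\in[\beta_1,\beta_2]$ with $F(x)+F(x+\xi)<0$ on $(0,\xi)$, which I would establish by taking $\xi=\beta_1$ and analysing the polynomial $P(x)=F(x)+F(x+\beta_1)$ via $P(0)=F(\beta_1)=0$, $P(\beta_1)=F(2\beta_1)<0$ (since $2\beta_1\le\beta_2$), and the sign pattern of $F$; and finally the sign $\int_{\beta_1}^{d_1}g(x)F(x)\,dx\ge 0$ by choosing $d_1$ on an appropriate level set of $\mathcal{E}$ so that the positive contribution of $F$ on $(\beta_2,d_1)$ dominates the negative one on $(\beta_1,\beta_2)$, which is feasible when $|b|$ is sufficiently large. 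Theorem~\ref{mainresult2} then delivers two limit cycles for all $b\le -5\sqrt{a}/2$, yielding $\varphi(a,c)\in(-5\sqrt{a}/2,-2\sqrt{a})$ and completing the proof.
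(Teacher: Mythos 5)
Your proposal follows essentially the same route as the paper: the regions are assembled from Lemmas \ref{glo1} and \ref{glo2}, the double-limit-cycle surface $DL$ is located by a generalized rotated vector field argument, and the key lower bound $\varphi(a,c)>-5\sqrt{a}/2$ is obtained by applying Theorem \ref{mainresult2} with $\xi=\beta_1$ after observing that $\beta_2\geq 2\beta_1$ is equivalent to $b\leq -5\sqrt{a}/2$ --- exactly the computation in the paper, including the factorization of $F(x)+F(x+\beta_1)$ and the monotonicity of $f(x)/g(x)$. The only structural difference is cosmetic: you rotate in $b$ for fixed $a$ and define $\varphi(a,c)$ directly as a threshold in $b$, whereas the paper rotates in $a$ for fixed $b$, obtains a unique $a^{*}=\phi(b,c)$ at which a semistable cycle occurs, and then inverts to get $b=\varphi(a,c)$; the paper itself notes that the family is a generalized rotated vector field in each of $a$ and $b$, so the two formulations are interchangeable.

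One piece of the theorem is not addressed in your proposal. The bifurcation diagram of Figure \ref{GLObd} includes $H_1$ and $H_2$ as (generalized) Hopf bifurcation surfaces, and the paper devotes the first part of its proof to the local analysis of the origin: a source for $a<0$, a sink for $a>0$, a fine focus of order one on $a=0$ with $b\neq 0$ (stable for $b>0$, unstable for $b<0$), a Bautin point of order two at $a=b=0$, and degenerate nodes when $c=0$. This local information both justifies the labels $H_1$, $H_2$ in the diagram and is how the paper seeds the existence of exactly two limit cycles for $a=\epsilon$ small and $b<0$ (the large stable cycle from Lemma \ref{glo1} persists while a small unstable cycle is born at the Hopf bifurcation). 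Your proposal seeds existence only through Theorem \ref{mainresult2}, i.e., only for $b\leq -5\sqrt{a}/2$; that suffices to prove $\varphi(a,c)>-5\sqrt{a}/2$ and to run the rotated-field argument, but the Hopf/Bautin analysis must still be supplied if the diagram, including the surfaces $H_1$ and $H_2$, is to be established in full.
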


\smallskip

\begin{proof}
When $c>0$, $O$ is clearly a source when $a<0$
and a sink when $a>0$.
Furthermore,
when $c>0$ and $a=0$,
we can check that
$O$ is a stable fine focus of order 1 when $b>0$
and an unstable fine focus of order 1 when $b<0$ according to \cite[p. 156]{GH}.
Moreover, when $c>0$ and $a=b=0$,
$O$ is a stable fine focus of order 2 according to Bautin bifurcation Theorem of \cite[Chapter 8]{Kuz}.
$O$ is a Bautin point.
Thus, $H_1$ and $H_2$ are two Hopf bifurcation surfaces for $c>0$.

When $c=0$ and $a\neq0$, using the transformation
\[
x\to x+\frac{1}{a}y,~y\to y,
\]
we rewrite system \eqref{GLO} as follows
\begin{eqnarray}
\left\{\begin{array}{l}
                   \dot{x}=-ax-\big(\frac{1}{a}+b\big)\big(x+\frac{1}{a}y\big)^3-\big(x+\frac{1}{a}y\big)^5,
                    \\
                   \dot{y}=-\big(x+\frac{1}{a}y\big)^3.
                 \end{array}\right.
\label{GLO2}
\end{eqnarray}
According to Theorem B.1 of \cite[Chapter 2]{Zh}, the origin of system \eqref{GLO2} is a stable degenerate node when $a>0$
and an unstable degenerate node when $a<0$, and so is the origin of system \eqref{GLO}.
When $a=c=0$, with the transformation of
\[
x\to x,~y\to y+bx^3+x^5,
\]
system \eqref{GLO} can be rewritten as
\begin{eqnarray}
\left\{\begin{array}{l}
                   \dot{x}=y,
                    \\
                   \dot{y}=-x^3-(3bx^2+5x^4)y.
                 \end{array}\right.
\label{GLO3}
\end{eqnarray}
According to Theorem B.2 of \cite[Chapter 2]{Zh}, the origin of system \eqref{GLO3} is a stable degenerate node, and so is the origin of system \eqref{GLO}.
Thus, $H_1$ and $H_2$ are two generalized Hopf bifurcation surfaces for $c=0$.

The fixed $c$ and $b$(resp. $a$) make it easy to check that system \eqref{GLO} is a generalized rotated vector field about $a$(resp. $b$).
When $a$(resp. $b$) increases, a stable limit cycle contracts, and an unstable limit cycle expands according to Theorem 3.5 of \cite[Chapter 4]{Zh}.
When $a=\epsilon$ and $b<0$,
system \eqref{GLO} has exactly two limit cycles, where $\epsilon>0$ is sufficiently small.
Moreover,
when $b=-2\sqrt{a}$, system \eqref{GLO} has no limit cycle.
Therefore, in $(\epsilon, b^2/4)$, there is a unique $a^*$ (denoted by $\phi(b,c)$)
such that system \eqref{GLO} has a semistable limit cycle.
Furthermore, system \eqref{GLO} has exactly two limit cycles when $0<a<\phi(b,c)$
and no limit cycle when $a>\phi(b,c)$.
Hence, $b=\varphi(b,c)=\phi^{-1}(b,c)$, i.e., $DL$ is a double-limit-cycle bifurcation surface.

Furthermore, we will prove that system \eqref{GLO} has exactly two limit cycles
when $b\leq-5\sqrt{a}/2$. $b\leq-5\sqrt{a}/2$ is clearly equivalent to $\beta_2\geq2\beta_1$.
Then, for $x\in(0,\beta_1)$, we can obtain
\begin{eqnarray*}
F(x)+F(x+\beta_1) &=& x(x^2-\beta_1^2)(x^2-\beta_2^2)+x(x+\beta_1)(x+2\beta_1)(x^2+2\beta_1x+\beta_1-\beta_2^2)
\\
 &=&x(x+\beta_1)[2x^3+3\beta_1x^2+(5\beta_1^2-2\beta_2^2)x+2\beta_1^3-\beta_1\beta_2^2]
 \\
 &\leq&x(x+\beta_1)(2x^3+3\beta_1x^2-3\beta_2^2x-2\beta_1^3)
  \\
 &=&x(x+\beta_1)(x-\beta_1)(6x^2+9\beta_1x+2\beta_1^2)<0.
\end{eqnarray*}
On the other hand, we obtain that
\begin{eqnarray*}
 \frac{d(f(x)/g(x))}{dx} &=&  \frac{x^6+(\alpha_1^2+\alpha_2^2)x^4-3\alpha_1^2\alpha_2^2x^2+c[3x^4-(\alpha_1^2+\alpha_2^2)x^2-\alpha_1^2\alpha_2^2]}{(cx+x^3)^2}>0
\end{eqnarray*}
for $x\geq \alpha_1$,
where $\alpha_1,\alpha_2$ are zeros of $f(x)$ and $0<\alpha_2<\alpha_1$.
Moreover, $\int_{\beta_1}^xF(s)g(s)ds>0$ when $x$ is large.
By Theorem \ref{mainresult2}, system \eqref{GLO} has exactly two limit cycles
when $b\leq-5\sqrt{a}/2$.
Thus, $\varphi(a,c)>-5\sqrt{a}/2$.
Consequently, this lemma is proven.
\end{proof}

Clearly, the bifurcation diagrams of systems \eqref{R} and \eqref{GLO} are similar.
Thus, we can also prove that the double-limit-cycle bifurcation curve $b=\varphi(a)$ of systems \eqref{R}
satisfies  $-5\sqrt{a}/2<\varphi(a)<-2\sqrt{a}$.\\

\subsection{ Application 2: limit cycles of a class of the Filippov system}

Consider the following generalized Filippov system, which is a discontinuous system
\begin{eqnarray}
\left\{\begin{array}{l}
                   \dot{x}=y-(ax+bx^3+x^5), \\
                   \dot{y}=-x-c\ {\rm sgn}(x),
                 \end{array}\right.
\label{Filippov}
\end{eqnarray}
where $(a,b,c)\in\mathbb{R}^2\times\mathbb{R}^+$.
See \cite{Chen,Kunze}.
 When $a\geq0$ and $b\geq-2\sqrt{a}$, system \eqref{Filippov} has no limit cycle;
when $a<0$, or $a=0$ and $b<0$, system \eqref{Filippov} has a unique limit cycle,
which is stable;
when $a>0$ and $b<-2\sqrt{a}$, system \eqref{Filippov} has at most two limit cycles.
Since the proofs are identical to those in Section 4, we omit them.
Theorem \ref{mainresult} has been applied in system \eqref{Filippov}.
Of course, for  system \eqref{Filippov}, we have the similar bifurcation diagram of Theorem \ref{glopro}.

\subsection{ Example 1}

Here, an example is presented to show that our results is valid for the non-smooth systems. This example also shows that Theorem \ref{mainresult} is more general than Theorem A in \cite{Zh} even if it reduces to a smooth system.

Consider the following piecewise linear system
\begin{eqnarray}
\left\{\begin{array}{l}
                   \dot{x}=y-{\rm sgn}(x)[a_1|x|^{2/3}+\frac{a_2}{2}(||x|^{2/3}+1|-||x|^{2/3}-1|)
                   \\~~~~~~~~~~~+\frac{a_3}{2}(||x|^{2/3}+2|-||x|^{2/3}-2|)], \\
                   \dot{y}=-x^{1/3},
                 \end{array}\right.
\label{PLS}
\end{eqnarray}
where $a_1>0$, $a_2>0$ and $-a_1-a_2<a_3<-a_1-a_2/2$.
It is clear that conditions (1-3) of Theorem \ref{mainresult} hold.
It is easy to verify that for $x>2\sqrt{2}$,
\begin{eqnarray*}
\frac{df(x)}{dx}&=& -\frac{a_1}{3x^{4/3}}<0,
\\
 \frac{d(f(x)/g(x))}{dx}&=& -\frac{2a_1}{3x^{5/3}}<0,
 \\
  \frac{d[(F(x)-F(2\sqrt{2}))f(x)/g(x)]}{dx}&=& 0.
\end{eqnarray*}
 Thus, condition (4) of Theorem \ref{mainresult} also holds. Therefore, our theorem is valid for system \eqref{PLS}. However, condition (d) of Theorem A in \cite{Zh} does not hold because both $f(x)$ and $ f(x)/g(x)$ decrease. Thus, Theorem A cannot be applied to this case even if system (\ref{PLS}) reduces to a smooth system.


{\footnotesize

}

\end{document}